\newtheorem{theorem}{Theorem}
\newtheorem{lemma}[theorem]{Lemma}
\theoremstyle{remark}
\newtheorem{remark}{Remark}[section]
\newtheorem{example}{Example}[section]
\theoremstyle{definition}
\newtheorem{definition}[theorem]{Definition}
\newcommand{\abs}[1]{\vert#1\vert}
\newcommand{\midB}{\mathrel{} \middle | \mathrel{} }
\newcommand{\grad}{\nabla}
\newcommand{\bq}{\begin{equation}}
\newcommand{\eq}{\end{equation}}
\newcommand{\R}{\mathbb{R}}
\newcommand{\Rd}{\R^d}
\newcommand{\dx}{h}
\newcommand{\Ind}{K}
\begin{document}

\title[Efficient Linear Programming for Optimal Transportation]{An efficient Linear Programming method for  Optimal Transportation}
\author{Adam~M. Oberman}
\author{Yuanlong Ruan}
\thanks{The first author thanks Mikhael Mamaev who contributed to this project as part of an NSERC Undergraduate Summer Research Award
 (USRA)}
\date{\today}
\begin{abstract}
An efficient  method  for computing solutions to the  Optimal Transportation (OT) problem with a wide class of cost functions is presented.
  The standard linear programming (LP)  discretization of the continuous problem becomes intractible for moderate grid sizes.   A grid refinement method results in a linear cost algorithm. 
 Weak convergence of solutions is stablished.  Barycentric projection of transference plans is used to improve the accuracy of solutions. The method is applied to more general problems, including partial optimal transportation, and barycenter problems.  Computational examples validate the accuracy and efficiency of the method.   Optimal maps between nonconvex domains, partial OT free boundaries, and high accuracy barycenters are presented. \end{abstract}

\maketitle
\tableofcontents

\section{Introduction}
The theory of Optimal Transportation (OT)  is a powerful tool which defines a distance on probability measures, called the Wasserstein distance.  It has profoundly impacted such fields as  fluid mechanics, nonlinear diffusions,  differential geometry, and it has  found applications in areas such as reflector design, economics, astrophysics, etc.\ \cite{EvansSurvey, Villani2003Topics,Villani2008Optimal}.  The related barycenter problem has been used for comparison of histograms of features for big data problems,  and shape interpolation for computer graphics  \cite{agueh2011barycenters, solomon2015convolutional, cuturi2015smoothed}.   

The Wasserstein distance cannot be evaluated analytically in most cases, and so numerical methods are needed.  Existing numerical methods are either inaccurate, very costly (polynomial time), or too restrictive on the costs and measures to be widely applicable.   These methods do not meet need the needs of modern applications, which involve large problem sizes or require the solution of large numbers of OT problems.

We introduce an efficient  Linear Programming (LP) method for solving Optimal Transportation problems.  Our method is not restricted to quadratic cost functions: it allows for a wide class of cost functions.    By efficient, we mean linear cost in terms of solution time and memory requirements (see Table~\ref{table:OTPerformance} below).  In practise we can compute solutions with problem sizes going up to half a million  variables on a laptop computer using academic or commerical optimization software.  Available parallel LP solvers allow the method to scale to even larger problem sizes.   

Two types of error are present in approximation problems which discretize a continous (infinite dimensional) optimization problem.  This first is the discretization error: the different between the exact solution of the discrete problem (at a given resolution) and the solulution of the limiting problem.  Our discretization of the infinite dimensional LP is natural, allowing a weak convergence proof to be established using available stability results.   
The second type of error relates to how precisely the finite dimensional optimization problem is solved.  In contrast to many existing methods, which solve surrogate or approximate optimzation problems, 
 the method presented here compute the optimal solution of the discrete linear transportation problem, to within standard solver tolerances.  

A limitation  of the  Linear Programming approach to Optimal Transportation, compared to the Partial Differential Equations methods discussed below, is that maps are approximated by plans, which are multivalued (see Figure~\ref{fig:Maps and Plans}).  
We overcome this limitation using a theoretical tool, barycentric projection, which recovers the approximate map and dramatically improves the accuracy of solutions.

The efficiency and accuracy of the method reveals solution features not otherwise  available, including optimal maps between nonconvex sets, and for non-quadratic cost functions.
The method is easily generalized to related problems, allowing for the computation of accurate free boundaries in partial optimal transportation, and high resolution  barycenters for shapes.

\subsection{Background on the Optimal Transportation problem}
The Monge formulation of the Optimal Transportation problem which seeks optimal \emph{maps} while the Kantorovich formulation  seeks optimal \emph{transference plans}.  Transference plans are a weaker notion of solution which can be computed using linear programming.

\begin{definition}[Givens for the OT problem]\label{defn:data}
Given two probability measures, 
\[
\text{ $\mu, \nu$  with bounded  supports   $X, Y \subset \Rd$, respectively }
\]
and the  cost function 
\[
c(x,y): X\times Y\to \R.
\]   
If the measures $\mu$ and $\nu$ are absolutely continuous with respect to Lebesgue measure, write 
\[
d\mu(x) = f(x) dx,
\qquad
d\nu(y) = g(y) dy
\]
for positive, Lebesgue measurable functions $f$ and $g$. 
\end{definition}

 The goal is to rearrange one measure into the other, while minimizing the cost of mapping $x$ to $y$, weighted by the amount of mass transported.  There are two notions of rearrangement.

\subsection{The Monge formulation}
In the Monge formulation, we consider the class of measurable, one-to-one mappings $T:\Rd\to\Rd$ which rearrange $\mu$ into $\nu$.  If the measures $\mu$ and $\nu$ have smooth densities, 
and if $T$ is continuously differentiable, the change of variable formula from multivariable calculus, leads to the 
rearrangement condition,
\bq\label{determinant}
f(x) = g(T(x)) \abs{ \det \grad T(x)}
\eq
which is a fully nonlinear constraint on the mapping $T$.
For general measures, the rearrangement condition is given by 
\bq\label{rearrange}
\nu[B] = \mu[T^{-1}(B)], 
\qquad
\text{ for any measureable set } B \subset Y.
\eq
We write $\nu = T\# \mu$, and say that $T$ transports $\mu$ onto $\nu$.   

Monge's problem is to minimize the total work corresponding to a map $T$ over the set of all measurable maps $T$ which transport $\mu$ onto $\nu$. 
\bq \label{MongeProblem}
\begin{aligned}
\text{Minimize   }   & I[T] = \int_X c(x,T(x)) d\mu(x)
\\  \text{ Subject to: } 
& T\# \mu = \nu
\end{aligned}
\eq

\subsection{The Kantorovich Formulation}\label{sec:Kant}
The Kantorovich formulation  recasts the Optimal Transportation problem as an infinite dimensional Linear Program.  Refer again to \cite{Villani2003Topics, EvansSurvey}.
  
A transference plan generalizes a mapping, in order to allow for mass from  a point $x$ to be split into multiple parts.   A transference plan is a probability measure, $\pi$, on the product space $X\times Y$, whose marginals are $\mu$ and $\nu$.   This means that
\[
\pi [ A\times Y] =\mu(A),
\quad
\pi( X\times B)  =\nu(B)
\]
for all measurable subsets $A$ of $X$ and $B$ of $Y$.
The set of transference plans is written
\bq\label{Tplans}
\Pi(\mu,\nu)  =
\left\{ 
\pi  \in \mathcal{P}(X\times Y) \midB
\text{ marginals of $\pi$ are } \mu, \nu
\right \}
\eq
Kantorovich's formulation of the optimal transportation problem take the form of an infinite dimensional linear program
\bq\label{KantLP}\tag{KLP}
\text{Minimize   }  I[\pi]  =\int_{X\times Y}c(x,y)
d\pi(x,y), \quad \text{ for } \pi \in \Pi(\mu,\nu) 
\eq
Since the constraints are linear, and the cost function is also linear, this is an infinite dimensional linear program.    Under broad conditions, this problem has a minimal value, which is called the \emph{optimal transportation cost} between $\mu$ and $\nu$.   However, in general, there may be more than one optimal transference plan.

\begin{remark}[When an optimal plan is a map]
For many costs, the transference plan solution from \eqref{KantLP} is given by a map.
This is the case for 
\[
c(x,y) =h(\abs{x-y}), 
\]
where $h$ is strictly convex and grows at super linear speed.   In particular, $c(x,y) =\abs{x-y}^{p}$ for $p>1$, which includes the important case $p=2$. 

More generally, the \emph{twist} condition, which requires that $g(y) = \grad_x c(x,y)$ must be an injective function of $y$, ensures that the optimal plan is a map, see \cite{gangbo1996geometry, carlier2003duality}, or \cite[Chapter 10]{Villani2008Optimal}.  
For other cost functions, an optimal  transference plan  need not be a map.  In particular, this is the case for the Monge cost  $c(x,y) = \abs{x-y}$.
\end{remark}

\begin{example}[Monge-Amp\`ere Partial Differential Equation]\label{ex:MongeAmpere}
When   $c(x,y) = \abs{x-y}^2$, the densities are smooth, and $Y$ is convex, 
the Monge problem becomes a fully nonlinear Monge-Amp\`ere PDE with unusual boundary conditions  
\cite[Section 4]{EvansSurvey} or \cite[Chapter 4]{Villani2003Topics}.  
The optimal  map $T = \grad u$ is given by the gradient of a convex function, so that \eqref{determinant} becomes
\[
\det( D^2 u(x)) = \frac{f(x)}{g(\grad u(x))}
\]
along with the conditions $\grad u(X)= Y$ and the constraint that $u(x)$ be convex.  
\end{example} 

\begin{example}[Assignment]\label{example:assignment}
A special case of the Monge problem is the \emph{assignment problem} from combinatorial optimization \cite[Ch 17 and Ch 21]{schrijver2003combinatorial}, which is to find a minimum cost bipartite  matching between two sets.  See Figure~\ref{fig:Maps and Plans}(a). 
If  $X$ and $Y$ are discrete with $n$ points each having the same mass, then 
\[
\mu = \frac{1}{n} \sum_{i=1}^n \delta_{x_i},
\qquad
\nu = \frac{1}{n} \sum_{i=1}^n \delta_{y_i},
\]
where $\delta_x$ represents the Dirac mass at the point $x$. 
In this case, the Monge problem \eqref{MongeProblem}, becomes the assignment problem 
\[
\text{Minimize }   \sum_{i=1}^n c(x_i,y_{T(i)}),
\quad  
\text{ over all permutations }  T \text{ of } \{1,\dots, n\}
\]
\end{example}

\begin{example}[The discrete transportation problem]
A special case of \eqref{KantLP} is the transportation Linear Program, see Figure~\ref{fig:Maps and Plans}(b). 
Given discrete probability measures 
\[
\mu =\sum_{i=1}^n \mu_i \delta_{x_i},
\quad
\nu =  \sum_{j=1}^m \nu_j \delta_{y_j},
\qquad
\mu_i, \nu_j \ge 0, 
~
\sum_{i=1}^n \mu_i  =  \sum_{j=1}^m \nu_j = 1.
\]
The cost function is given by  the non-negative $n\times m$ matrix, $c = (c_{ij})$.  
A discrete transference plan, $\pi$, is a non-negative $n\times m$ matrix whose marginals are $\mu$ and $\nu$.  The set of transference plans \eqref{Tplans} becomes 
\bq\label{FDproject}
\Pi(  \mu,\nu)  =
\left\{ 
\pi = (\pi_{ij})  \midB 
\sum_{j=1}^m \pi_{ij}  =\mu_i, 
\quad 
\sum_{i=1}^n \pi_{ij}  =\nu_j, 
\quad 
\pi_{ij} \ge 0
\right \}
\eq

The transportation linear program is given by 
\bq \label{LPfull} \tag{LP}
\text{Minimize   } I[\pi] =  \sum_{i=1}^n\sum_{j=1}^m c_{ij} \pi_{ij}, 
\quad
\text{ for  } \pi \in \Pi(\mu, \nu)
\eq
The problem \eqref{LPfull} is easily written in the standard form for a linear program.
\end{example} 

\begin{remark}\label{rem:quadratic}
Notice that the number of variables used to represent the densities in \eqref{LPfull} is $n + m$,
and the number of variables for the plan $\pi$ is $n m$.  So the size of the linear programming problem grows \emph{quadratically} in the number of variables. 
\end{remark}

\begin{example}[Transportation recovers Assignment]  Consider the discrete transportation problem \eqref{LPfull} where the measures 
are given by the equal weight measures from Example~\ref{example:assignment}.  In this case, (after removing the factor of $1/n$)  the set of  plans, $\Pi$, becomes the set of  bistochastic matrices. By Choquet's theorem, the solution of \eqref{LPfull} are extremal points of bistochastic matrices.  By Birkhoff's theorem, these  are given by permutation matrices.  Thus the optimal transference plans in Kantorovich's problem coincide with the permutation maps in Monge's problem, and the solutions are the same. 
\end{example} 

\begin{figure}[ptb]
\begin{minipage}{0.5\linewidth}
\includegraphics[width=\linewidth]{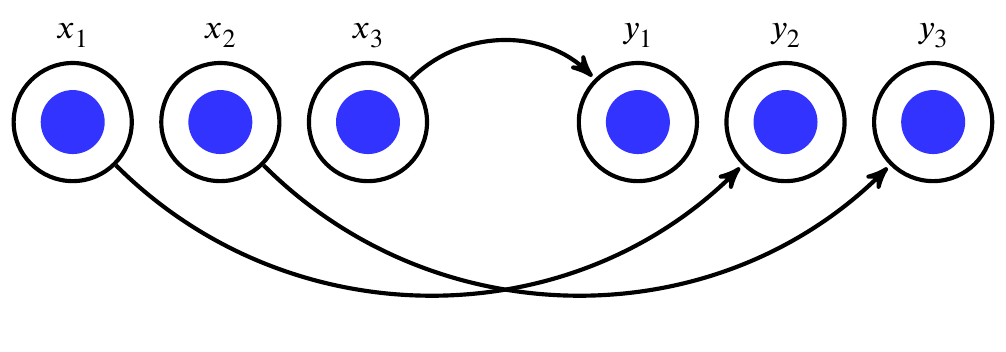}
\includegraphics[width=\linewidth]{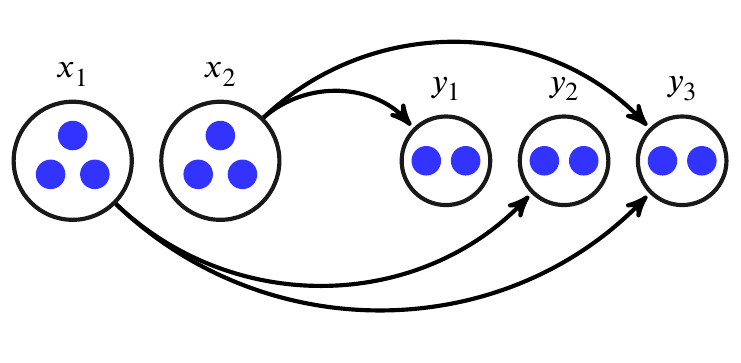}
\end{minipage}
\vspace{-.25in}
\caption{(a) Illustration of a mapping from three points to three points.  (b)  An optimal transference plan can split points.}%
\label{fig:Maps and Plans}
\end{figure}

\subsection{Related work} \label{sec:related work}
Rigorous approaches to solving the Optimal Transportation rely on different theoretical notions of the solution.   Many solution methods are specific to the case of quadratic cost (distance squared), or the Monge cost (distance).

\subsubsection*{Partial Differential Equations}
First consider the important special case of quadratic cost,  $c(x,y) = \abs{x-y}^2$.
The early Benamou-Brenier  formulation leads to a fluid mechanics solver, by adding a synthetic time variable to the problem~\cite{BenamouBrenier}, adding one dimemsion to the problem.   The Monge-Amp\`ere Partial Differential formulation was recently used to solve the OT problem using a convergent finite difference method \cite{Benamou2014107}.  This method  places  regularity requirements on the densities,  one of which must have convex support.   Our method has comparable accuracy to the PDE approach, see \S\ref{sec:accuracy}.

\subsubsection*{Earth Mover Distance}
For the linear cost case, $c(x,y) = \abs{x-y}$, fast methods for computing approximate solutions are available~\cite{andoni2008earth}.   These methods work by computing an approximate embedding of the 1-Wasserstein distance into Euclidean space with the $L^1$ metric.  However the embedding induces a distortion of the metric which limits the accuracy of the method~\cite{naor2007planar}.

\subsubsection*{Entropic regularization.} 
Entropic regularization methods are a recently introduced method which modify the the optimization problem by adding a small multiple of an entropy term~\cite{cuturi2013sinkhorn, cuturi2015smoothed} to the objective (or cost) function.  
 The regularized problem can be solved by a Bregman iterative solution method  \cite{benamou2015iterative}.    However, the number of iterations required for convergence increases as the regularization parameter goes to zero.   
The recent article \cite{solomon2015convolutional} applied the method to large scale geometric problems.
  We compare the performance of our method to the entropic regularization method in \S\ref{sec:performance}.   
  Our method also avoids the blurring of barycenters introduced by the entropic regularization, 
 see \S\ref{sec:barcyenterN}.

\subsubsection*{Linear Programming and Combinatorial Optimization} 
The linear programming problem implemented directly is simply too costly to be effective for solving the OT problem. For example, in the early paper   \cite{ruschendorf2000numerical}, problem sizes of up to  $N = 15$ were solved.  Current implementations still become very costly after $N = 7200$, see Table~\ref{table:OTPerformance}.  This is because the size of the problem grows quadratically in the number of variables, see Remark~\ref{rem:quadratic}.
 An alternative approach is to instead solve the assignment problem \cite{burkard1999linear}, 
 approximating fractional weights for the measures by multiple assignents, see for example see  \cite{bertsekas1989auction}.   However combinatorial optimization algorithms for the assigment problem do not seem to perform better than linear programming.  In best cases, the complexity is worse than quadratic \cite{schrijver2003combinatorial}.

\subsubsection*{Multiscale solvers} 
Multiscale solvers have previously  been used to improve solver performance, but without achieving linear efficiency. 
Merigot  solved a sequence of OT problems where the target is a sum of Diracs, improving performance by  an order of magnitude \cite{merigot2011multiscale}.   
Benamou and coauthors combines the approach of \cite{benamou2015iterative} with a grid refinement procedure to solve larger scale problems in  \cite{benamou2015numerical}. In \cite{carlier2014numerical} a one step grid refinement was used to find the support of the barcyenter.   Schmitzer \cite{schmitzer2015sparse, schmitzer2013hierarchical} used a grid refinement procedure which was be applied to both LP and combinatorial optmization solvers, 
improving performance by one order of magnitude.

\section{Discretization and convergence}\label{sec:discretization} 
In this section we perform the discretization  which reduces  \eqref{KantLP} to a finite dimensional Linear Program \eqref{LPfull}.   We prove that the solutions of the discrete problem converge weakly to the solution of \eqref{KantLP} as the grid resolution parameter $\dx \to 0$.   Even when the limiting solution is a map, the weak solution of \eqref{LPfull} can be a plan.  By using  barycentric projection of the transference plan, we recover the map, resulting in improved accuracy.

\subsection{Discretization: finite dimensional linear programming}
For claritiy and simplicity, we impose a Cartesian grid with uniform spacing $\dx$ on the domains $X$ and $Y$.   
The initial grids, $X^\dx$, and $Y^\dx$ are given by a  uniform Cartesian grid with spacing $h$ intersected with the support set, 
\[
X^\dx =  h\mathbb{Z^d} \cap X,
\qquad
Y^\dx = h\mathbb{Z^d} \cap Y.
\]
We enumerate the grid points, (which could also be referred to as \emph{quadrature} points), 
\[
X^\dx = \{x_1, \dots, x_n\},
\qquad
Y^\dx = \{y_1, \dots, y_m\}.
\]
Each $x_i$ is the center of a hypercube of width $\dx$,
\[
R_\dx(x_i) =  \left \{  x\in \R^d   \midB    \|x - x_i\|_\infty = \frac \dx 2  \right \}. 
\]
Define the approximate measures $\mu^\dx, \nu^\dx$, to be a weighted sums of Dirac masses whose weights corresponds to the integral of the measures over the hypercubes of width $\dx$ centred at $x_i$.  
\bq\label{discrete measures}
\begin{aligned}
\mu^\dx &= \sum_{i = 1}^n \mu_i^\dx \delta_{x_i}, 
\quad
\mu_i^\dx  = \mu( R_\dx(x_i) ) 
\\
\nu^\dx &= \sum_{i = 1}^m \nu_i^\dx \delta_{y_i}, 
\quad
\nu_i^\dx  = \nu( R_{\dx}(y_i) ) 
\end{aligned}
\eq
The discrete cost function is given by
\bq\label{discrete costs}
c_{ij} = c(x_i, y_j), 
\eq
With these definitions, the infinite dimensional optimal transportation problem \eqref{KantLP} is reduced to the finite dimensional linear programming problem \eqref{LPfull}.  We record it in the defintion which follows.

\begin{definition}\label{defn:2LP}
Define \eqref{LPfull}  at grid resolution $\dx$ to mean the costs and measures are given by  $c_{ij}^\dx, \mu^\dx, \nu^\dx$ using~\eqref{discrete measures} and~\eqref{discrete costs}.   
 Write $\pi^\dx = \pi^\dx_{ij}$ for an optimal  solution of  \eqref{LPfull}  at resolution $\dx$.
 From $ \pi^\dx_{ij}$, we recover the approximation to the optimal transference plan $\pi$ by the imbedding
\bq\label{discrete plan}
\pi^\dx = \sum_{i=1}^n \sum_{j=1}^m \mu^\dx_{ij} \delta_{(x_i, y_j)}.
\eq
\end{definition}

\begin{remark}
A more accurate method to discretize the measures would be use a centroidal Voronoi tessellation of the support of the measures, and let 
$x_i, y_j$ be the weighted barycenters of the Voronoi regions~\cite{doi:10.1137/S0036144599352836}, weights according to the measures $\mu$ and $\nu$, respectively.   In this case, we would refer to the points as \emph{quadrature points}.  This discretization of measures has been performed in \cite{merigot2011multiscale}. This corresponds to a \emph{quantization} problem, which is studied in the literature \cite{graf2000foundations}.  
\end{remark}

\subsection{Convergence of the linear progamming approximation}
In this subsection we prove that convergence of the solutions of \eqref{LPfull} to the solution of \eqref{KantLP} as $\dx \to 0$.   
We use a fundamental stability result for solutions of the optimal transportation problem,  along with the consistency of the approximation to obtain a weak convergence result.

Given $\mu, \nu$ and $c(x,y)$ as described in \S\ref{sec:Kant}.  Consider a sequence $\dx_k \to 0$.   
The following theorem is a special case of \cite[Theorem 5.20]{Villani2008Optimal}.
\begin{theorem}[Stability of optimal transportation]
Assume $X$ and $Y$ are compact, and that $c(x,y)$ is continuous.  
Let $\mu^{\dx_k}$ and $\nu^{\dx_k}$ be a sequence of measures that converge weakly to $\mu$ and $\nu$, respectively.  For each $k$ let $\pi^{\dx_k}$ be an optimal transference plan between $\mu^{\dx_k}$ and $\nu^{\dx_k}$.  Then, up to extraction of a subsequence,
\[
\pi^{\dx_k} \text{ convergences weakly to } \pi
\]
where $\pi$ is an optimal transference plan between $\mu$ and $\nu$. 
\end{theorem}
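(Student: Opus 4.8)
The plan is to run the classical compactness-and-optimality argument, exploiting that $X\times Y$ is compact so that no tightness issues intervene. First I would establish compactness of the sequence of plans: since $X\times Y$ is compact, every family of probability measures on it is automatically tight, so by Prokhorov's theorem the sequence $\pi^{\dx_k}$ admits a subsequence (not relabeled) converging weakly to some $\pi \in \mathcal{P}(X\times Y)$.

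Next I would verify that the limit is admissible, i.e.\ $\pi \in \Pi(\mu,\nu)$. The marginal projections are continuous with respect to weak convergence, since any test function $\psi$ on $X$ lifts to the test function $(x,y)\mapsto\psi(x)$ on $X\times Y$, and similarly for $Y$. Hence the marginals of $\pi$ are the weak limits of the marginals of $\pi^{\dx_k}$, namely $\mu$ and $\nu$ by hypothesis; by uniqueness of weak limits, $\pi$ has marginals $\mu$ and $\nu$.

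Then comes optimality, the heart of the matter. Since $c$ is continuous on the compact set $X\times Y$ it is bounded and uniformly continuous, so the functional $\pi\mapsto I[\pi]=\int c\,d\pi$ is continuous under weak convergence; in particular $I[\pi^{\dx_k}]\to I[\pi]$. It remains to show $I[\pi]\le I[\pi']$ for every competitor $\pi'\in\Pi(\mu,\nu)$. For this I would fix such a $\pi'$ and construct approximating plans $\pi'_k\in\Pi(\mu^{\dx_k},\nu^{\dx_k})$ with $I[\pi'_k]\to I[\pi']$; granting this, optimality of $\pi^{\dx_k}$ gives $I[\pi^{\dx_k}]\le I[\pi'_k]$, and letting $k\to\infty$ yields $I[\pi]\le I[\pi']$.

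The main obstacle is the construction of these competitors $\pi'_k$, which is a gluing argument. Because weak convergence on a compact space is equivalent to convergence in Wasserstein distance, there exist couplings $\gamma_k\in\Pi(\mu,\mu^{\dx_k})$ and $\eta_k\in\Pi(\nu,\nu^{\dx_k})$ whose transport costs $\int\|x-x'\|\,d\gamma_k$ and $\int\|y-y'\|\,d\eta_k$ tend to $0$. Using the gluing lemma I would paste $\gamma_k$, $\pi'$ and $\eta_k$ together along their common $\mu$- and $\nu$-marginals to obtain a measure on $X\times X\times Y\times Y$, and define $\pi'_k$ as its marginal on the two factors carrying $\mu^{\dx_k}$ and $\nu^{\dx_k}$; by construction $\pi'_k\in\Pi(\mu^{\dx_k},\nu^{\dx_k})$. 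The estimate $\Abs{I[\pi'_k]-I[\pi']}\to 0$ then follows from the uniform continuity of $c$ combined with the vanishing displacement of $\gamma_k$ and $\eta_k$, via a modulus-of-continuity bound. I expect this gluing-and-estimate step to be the only genuinely technical point, the remaining steps being soft consequences of compactness and weak continuity.
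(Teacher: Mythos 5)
Your proposal is correct, but it takes a genuinely different route from the paper, which in fact gives no proof of this statement at all: it simply declares the theorem to be a special case of \cite[Theorem 5.20]{Villani2008Optimal}, whose proof in that reference rests on tightness arguments together with the stability of $c$-cyclical monotonicity (optimal plans have $c$-cyclically monotone support, this property passes to weak limits, and under suitable hypotheses it characterizes optimality). Your argument replaces that machinery with a direct compactness-and-recovery scheme: Prokhorov compactness (tightness being automatic on the compact product $X\times Y$), weak continuity of the marginal projections and of $\pi\mapsto\int c\,d\pi$, and --- the real content, as you correctly identify --- the construction of competitors $\pi'_k\in\Pi(\mu^{\dx_k},\nu^{\dx_k})$ by gluing couplings $\gamma_k\in\Pi(\mu,\mu^{\dx_k})$ and $\eta_k\in\Pi(\nu,\nu^{\dx_k})$ with vanishing transport cost (available because weak convergence is metrized by $W_1$ on a compact metric space) onto an arbitrary $\pi'\in\Pi(\mu,\nu)$, followed by the estimate $\Abs{I[\pi'_k]-I[\pi']}\to 0$ via uniform continuity and boundedness of $c$ together with a Chebyshev-type splitting of the integration domain. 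All of these steps are sound, and the two-stage application of the gluing lemma (first along the common $\mu$-marginal, then along the common $\nu$-marginal, projecting onto the outer two factors) is the standard rigorous implementation of the ``approximate competitor'' idea. What your approach buys is a self-contained, elementary proof whose every step is checkable under the stated compactness and continuity hypotheses; what the paper's citation buys is generality and brevity --- Villani's theorem holds on Polish spaces for lower semicontinuous, possibly infinite-valued costs, a setting where your reliance on boundedness and uniform continuity of $c$ and on Wasserstein metrization of weak convergence would require nontrivial additional truncation and uniform-integrability arguments.
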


\begin{theorem}[Convergence of Linear Programming Solutions]
Suppose that  $X$ and $Y$ are compact, and that $c(x,y)$ is continuous.  
Let $\pi^\dx$ be a solution of \eqref{LPfull} 
given by the discretization \eqref{discrete measures} \eqref{discrete costs} \eqref{discrete plan}.
Then, up to extraction of a subsequence, $\pi^\dx$ convergences weakly to an optimal transference plan solution of \eqref{KantLP}.
\end{theorem}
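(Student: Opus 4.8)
The plan is to verify the hypotheses of the Stability Theorem stated just above and then invoke it directly. Fix a sequence $\dx_k \to 0$, and write $\mu^{\dx_k}, \nu^{\dx_k}$ for the discretized measures from \eqref{discrete measures} and $\pi^{\dx_k}$ for the embedded plan from \eqref{discrete plan}. Since $X$ and $Y$ are compact and $c$ is continuous by assumption, the only facts left to establish are (i) that $\mu^{\dx_k} \rightharpoonup \mu$ and $\nu^{\dx_k} \rightharpoonup \nu$ weakly, and (ii) that each $\pi^{\dx_k}$ is an optimal transference plan between $\mu^{\dx_k}$ and $\nu^{\dx_k}$ in the sense of \eqref{KantLP}.

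For (ii), I would observe that because $\mu^\dx$ and $\nu^\dx$ are purely atomic, supported on the finite sets $\{x_i\}$ and $\{y_j\}$, every $\pi \in \Pi(\mu^\dx,\nu^\dx)$ is necessarily supported on the product grid $\{x_i\}\times\{y_j\}$ and is therefore represented by a nonnegative matrix $(\pi_{ij})$ satisfying exactly the marginal constraints \eqref{FDproject}. Thus the infinite dimensional problem \eqref{KantLP} with data $\mu^\dx,\nu^\dx,c$ coincides term for term with the finite program \eqref{LPfull} at resolution $\dx$: the embedded measure \eqref{discrete plan} has marginals $\mu^\dx,\nu^\dx$ by the constraints, and it minimizes $I[\pi]$ precisely because $\pi^\dx_{ij}$ solves \eqref{LPfull}. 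Hence $\pi^{\dx_k}$ is an optimal plan between the discrete measures, as required.

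For (i), I would argue by a quadrature (Riemann-sum) estimate. Given any test function $\phi \in C(X)$, uniform continuity on the compact set $X$ gives a modulus of continuity $\omega$ with $\abs{\phi(x) - \phi(x_i)} \le \omega(\sqrt d \, \dx/2)$ whenever $x \in R_\dx(x_i)$. Writing $\int \phi \, d\mu^\dx = \sum_i \phi(x_i)\,\mu(R_\dx(x_i))$ and comparing with $\sum_i \int_{R_\dx(x_i)} \phi\, d\mu$ bounds the difference by $\omega(\sqrt d\,\dx/2)$, which tends to $0$. Since the cubes $\{R_\dx(x_i)\}$ are disjoint and, as $\dx \to 0$, exhaust $X$, the remaining sum converges to $\int_X \phi\, d\mu$; the identical argument applies to $\nu$. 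This yields the weak convergence $\mu^{\dx_k} \rightharpoonup \mu$ and $\nu^{\dx_k} \rightharpoonup \nu$.

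With (i) and (ii) in hand the Stability Theorem applies verbatim: up to extraction of a subsequence, $\pi^{\dx_k}$ converges weakly to some $\pi$ that is an optimal transference plan between $\mu$ and $\nu$, i.e.\ a solution of \eqref{KantLP}. The step I expect to require the most care is the boundary handling inside (i): the cubes centered at grid points $x_i \in X$ need not cover a thin strip of $X$ near $\partial X$, so $\mu^\dx$ may carry slightly less than unit mass. I would control this by noting that the uncovered strip has width $O(\dx)$ and hence $\mu$-mass tending to $0$ (the strips shrink to a set of Lebesgue measure zero and $\mu$ is a finite Borel measure), so that the total masses converge to $1$ and the sub-probability limit is in fact $\mu$; renormalizing $\mu^\dx,\nu^\dx$ to a common unit mass, if desired, alters the weights by a factor $1 + o(1)$ and does not affect the limit.
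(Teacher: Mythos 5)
Your proposal is correct, and at the top level it follows the same strategy as the paper: verify the hypotheses of the stability theorem and invoke it. The interesting difference is in how the two arguments bridge the gap between the discrete cost data $c_{ij}$ and the continuous cost $c$ required by the stability theorem. The paper handles this by extending $c^\dx_{ij}$ to a continuous function on $X\times Y$ via multilinear interpolation and noting it converges uniformly to $c$; strictly speaking this requires the stronger form of the stability result (as in Villani's Theorem 5.20) in which the costs themselves are allowed to vary along the sequence, which the version stated in the paper does not mention. You avoid this entirely with your observation (ii): since $\mu^\dx,\nu^\dx$ are atomic, every plan in $\Pi(\mu^\dx,\nu^\dx)$ is supported on the grid $\{x_i\}\times\{y_j\}$, and because $c_{ij}=c(x_i,y_j)$ exactly, the finite program \eqref{LPfull} \emph{is} the Kantorovich problem \eqref{KantLP} for the fixed cost $c$ between the discrete measures. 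This lets you apply the stability theorem verbatim as stated, and it also supplies the hypothesis---that the embedded $\pi^{\dx_k}$ is an optimal transference plan between $\mu^{\dx_k}$ and $\nu^{\dx_k}$---which the paper asserts without justification. You also fill in the paper's ``clearly these measures converge weakly'' with an explicit quadrature estimate, including the boundary-strip issue; note that your dismissal of the uncovered strip tacitly assumes $\mu(\partial X)=0$ (automatic for the absolutely continuous measures of Definition~\ref{defn:data}, but not for arbitrary Borel measures), and the same implicit assumption underlies the paper's discretization as well, together with the need to renormalize so that $\mu^\dx$ and $\nu^\dx$ have equal total mass, which you correctly flag.
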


\begin{proof}
The measures given by \eqref{discrete measures} are approximations by weighted Diracs masses over intervals whose volumes go to zero as $\dx \to 0$.  So clearly these measures converge weakly to the limits $\mu$ and $\nu$. 

The discrete cost function given by \eqref{discrete costs} can be extended to a continuous cost function defined on $X\times Y$ which converges uniformly to $c(x,y)$.   This can be accomplished, for example,  by  defining 
\[
c(x_i,y_j) = c^\dx_{ij}, \quad i=1,\dots, n, j = 1, \dots, m
\]
extend it continously to $X\times Y$, by multilinear interpolation. 

The solution of \eqref{LPfull} results in an optimal transference plan $\pi^\dx$ given by \eqref{discrete plan}.
Apply the stability theorem above  to obtain the desired weak convergence result.  \end{proof}

\subsection{Recovering the map from the plan using barycentric projection}\label{sec:Barycenter}
Even when the limiting transference plan $\pi$ is a map, 
the approximating  discrete marginal $\pi^\dx$ will not be map, in general.
However an approximation to the optimal map can be recovered from the transference plan using \emph{barycentric projection}.

The transference plan obtained from the linear programming solution $\pi^\dx$ is given by the imbedding  \eqref{discrete plan} 
$
\pi^\dx = \sum \pi^\dx_{ij} \delta_{(x_i, y_j)}.
$
Thus the  mass $\mu_i$ at $x_{i}$ is transported to multiple points $y_{j}$ with weights given by 
$
\mu_i x_i \mapsto \sum_{k=1}^m \pi_{ik} \delta_{y_k}.
$
\begin{definition}[Barycentric projection of transference plan]
Define for each $i$ with $\mu_i > 0$, $\bar y_i$ to be
the Euclidean barycenter of the points $(y_1,\dots, y_m)$ with weights $(\pi_{i1}, \dots \pi_{im})$, 
\[
\bar y_i = 
\frac{\sum_{k=1}^m \pi_{ik}y_{k}}{\sum_{k=1}^m\pi_{ik}}.
\]
Then define the barycentric projection of the transference plan $\pi^\dx$ by 
\[
\bar \pi^\dx =  \sum_{i=1}^n  \mu^\dx_i\delta_{(x_i, \bar y_i)}.
\] 
See Figure~\ref{fig:barycenter}.
\end{definition}

  See \cite[Definition 5.4.2]{ambrosio2006gradient} for the continuous case. 
Then $\bar \pi^\dx$ is a (discrete) \emph{map} which pushes forward $\mu^\dx$ to (an approximation) of $\nu^\dx$. 

\begin{remark}
Generally, $\bar y_{i}$ no longer belongs to the set $\left\{  y_{1},\dots,y_{m}\right\}$.  Correspondingly,   while the first marginal of $\bar \pi^\dx$ is equal to $\mu^\dx$, the second marginal of  $\bar \pi^\dx$  is generally different from $\nu^\dx$.
However, it is still a weak approximation of $\nu$, which converges weakly to $\nu$ as $\dx \to 0$.
\end{remark}
Barycentric projection is discussed in  \cite{ambrosio2006gradient}.  In particular, by Theorem 5.4.4 and by Lemma 12.2.3, we can conclude that, for convex costs, the barycentric projection of the approximations converges to the barycentric projection of the limiting transference plan.  In particular, when the unique limit is a map $\pi$, then $\bar \pi^\dx$ converges to $\pi$.

\begin{figure}[ptb]
\centering
\begin{minipage}{0.6\linewidth}
\includegraphics[width=\linewidth]{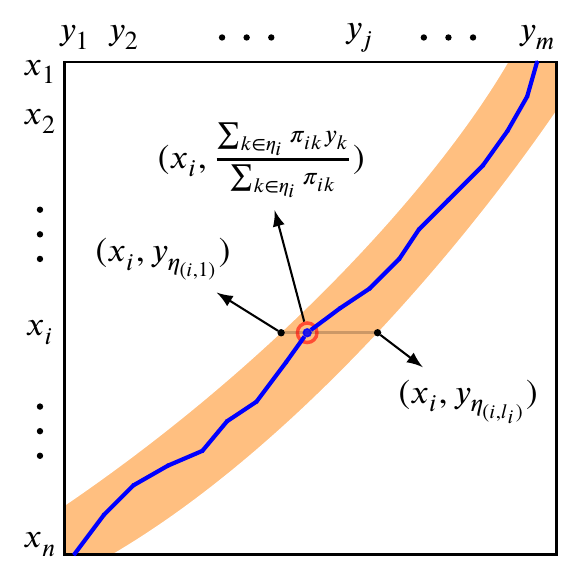}
\end{minipage}
\caption{Illustration of the  transference plan $
\pi = \pi^\dx_{ij}$ and the barycentric projection $\bar \pi$.
The coloured band represents the support of the transference plan.
Points in the same row are bundled into one as indicated by the small circle. The curve in the middle represents the resulting map.}%
\label{fig:barycenter}
\end{figure}

\section{Sparsity and a grid refinement procedure} \label{sec:refine}
The full linear program \eqref{LPfull} which approximates \eqref{KantLP} is too costly to solve for large problems.    In the case where the solution from \eqref{KantLP} is given by a map, we can expect that the discrete solution of \eqref{LPfull} will be \emph{sparse}, provided $\dx$ is small enough.   By estimating the  support of the transference plan, $\pi^\dx$,  the number of variables in \eqref{LPfull} is  reduced from quadratic to linear dependence on the inputs.  

Estimating the support in this manner forms the basis of the grid refinement procedure.
This process is iterated, leading to a multiple step grid refinement procedure. 

By considering a sequence of linear programs parameterized by $\dx$ we can find the support of the solution,  $\pi^\dx$,  from the support of the solution corresponding to a smaller sized problem.  
Then we can  solve the reduced size linear program, \eqref{LPsparse} below, with the expectation that the solution of the sparse problem is the same as that of the full problem.   This will be the case if the previously estimated support is exact.   We make a heurstic argument for why these supports should be the same, appealing to the stability property of linear programs under perturbation.  But this argument is not rigorous in the limit $\dx\to 0$, since the sensitivity of the linear program can depend on $\dx$.   However, by growing the support we can test after each step if the support exceeds the estimated support.  In practice, this has only happened for a the first stage for very small initial grid sizes.   However, if this were to occur, the algorithm could be modified as needed.

An alternative is to use the method of Schmitzer \cite{schmitzer2015sparse} (see also \cite{schmitzer2013hierarchical}), which ensures global optimality of the solution to the sparse subproblem by 
using locally adapted neighborhood sizes.     

We mention another apsect of the refinement technique which could be improved.   In the interested of keeping  the already complex code simpler, we refined using bisection in each square.  An improvement in accuracy, and perhaps also the sensitivity of the refined problem would be to refine adaptively, either according to the densities, or according to the senstitivity of the linear program.  In the latter case, we could also expect an inprovement in the accuracy.

\subsection{The sparse linear program}
Let $\pi = \pi_{ij}$ be the solution of \eqref{LPfull}, and let $\Ind_0$ be the basis set, the indices of the nonzero entries of $\pi$.     
If  $
\Ind_0 \subset \Ind$, for a known set, $\Ind$, then we can recover $\pi$ by solving the reduced linear program, 
\bq \label{LPsparse} \tag{LPR}
\begin{aligned}
\text{Minimize   } & \sum_{(i,j) \in K }  c_{ij} \pi_{ij}, 
\\ \text{ Subject to: } 
&
\left \{
\begin{aligned}
\sum_{ \{ j \mid (i,j) \in \Ind \}  }  \pi_{ij} &=\mu_i, & i\in [1,\dots,n]
\\ 
\sum_{ \{ i \mid (i,j) \in \Ind \}  }  \pi_{ij}  &=\nu_j, & j \in [1,\dots,m]
\\ 
\pi_{ij} &\ge 0,  & (i,j) \in \Ind 
\end{aligned} 
\right .
\end{aligned}
\eq

\begin{remark}
Note that, in contrast to~\eqref{LPfull}, the number of variables in \eqref{LPsparse} is $\abs{K}$, and the number of constraints is $\abs{K} + n + m$.    In practice, $\abs{K}$  is a small multiple of $n+m$.  
So the size of the reduced linear programming problem~\eqref{LPsparse} grows linearly in the number of variables used to represent the measures. 
\end{remark}

\subsection{Multiscale solution procedure}
The multiscale solution procedure is described here.  Refer to Figure~\ref{fig:Refinement}, which illustrates the process for a one dimensional example.

\begin{figure}[ptb]
\centering
\includegraphics[width=0.66\linewidth]{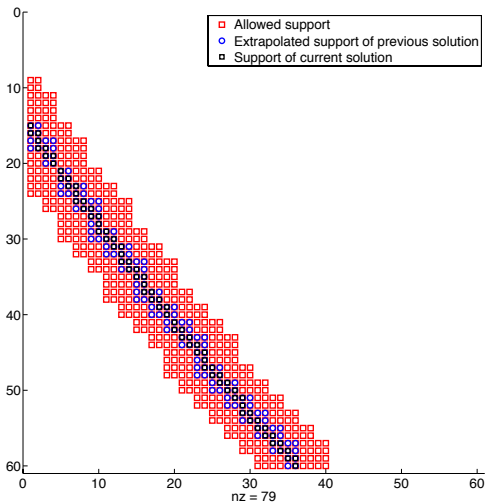}
\vspace{-.3in}
\caption{Illustration of the multiscale solution procedure.
}%
\label{fig:Refinement}%
\end{figure}

\begin{enumerate}
\item[(0)] \label{step:start}  The full linear program \eqref{LPfull}, with $c_{ij} = c_{ij}^\dx$ and $\mu = \mu^\dx, \nu = \nu^\dx$ as given by the quadrature rules~\eqref{discrete measures} and~\eqref{discrete costs},   is solved on a coarse grid, $X^\dx\times Y^\dx$.   

\item\label{grow support}  
From the discrete solution, $\pi^{\dx}_{ij}$,  recover  the spatial support set,
\[
S_\dx  = \left \{ R_\dx(x^\dx_i)\times R_\dx (y^\dx_j)    \midB  \pi^{\dx}_{ij}  > 0 \right \},
\qquad S_\dx \subset X^\dx\times Y^\dx.
\]
Grow the spatial support set, by including allowing neighbours (in space),
\[
\bar  S_{\dx} = \{ \text{ neighbours } S_\dx  \}.
\]
Extract the corresponding indices
\[
\overline \Ind_\dx=  \{ \text{ indices of center points in  } \bar S_\dx  \}.
\]

\item Refine the grids in each domain by a factor of two, labelling the new grids $X^{\dx/2}$,  $Y^{\dx/2}$.  
In each coordinate of $x = (x_1, \dots, x_d)$, the interval 
$[x_i -\dx, x_i + \dx]$ is halved,  to become $[x_i -\dx, x_i], [x_i,x_i + \dx]$, 
and the two new points are generated at the midpoints of the interval.  Thus,  the hypercube $R_\dx(x)$ is divided evenly into $2^d$ hypercubes, and the grid point $x$ generates $2^d$ new points on the refined grid, each at the centres of the smaller hypercubes.  

\item \label{step:refine support} The allowed spatial support is interpolated onto the finer grid $S_{\dx/2} = \bar S_{\dx}$
and the corresponding indices are extracted
\[
\Ind_{\dx/2} = \{ \text{ child indices of } \overline \Ind_\dx  \}.
\]

\item\label{step: refine data}  The sparsely supported linear programming problem \eqref{LPsparse} is solved with indices $\Ind_{\dx/2}$ and with $c^{\dx/2}$ and $\mu^{\dx/2}, \nu^{\dx/2}$ given by the quadrature rules~\eqref{discrete measures} and~\eqref{discrete costs}. 

\item Repeat starting at \eqref{grow support}, until a fine enough solution is computed.
\end{enumerate}

\begin{remark} Depending on the coarseness of the approximations, Step \eqref{grow support}  can be skipped, or repeated, so that neighbours of neighbours are included. The computational cost of adding additional support indices is not significant.  

Step \eqref{step:refine support} is geometrically simple;  algorithmically, an indexing formula is used to determine the child indices.

In step \eqref{step: refine data} \eqref{LPsparse}  has $\abs{\Ind_{\dx/2}}$ variables, which is on the order of $2^d (n + m)$. 
\end{remark}

\subsection{Justification of the grid refinement procedure}\label{sec:proofGrid}
In the case where the continuous plan $\pi$ is a map, from the convergence result of the linear programming approximation, we know that as $\dx \to 0$, the support of the discrete marginal $\pi^\dx$ is a weak approximation of the support of $\pi$.    Consequently, for $x_i, y_j$ away from the support of $\pi$, $\pi^\dx_{x_i,y_j} = 0$ is zero, for $\dx$ small enough.   

In practice, we found that the support of the refined solution was always contained in $\Ind_{\dx/2}$.  
Should the property ever fail, the code  checks if the support $\pi^\dx$ reaches the boundary of the index set, $\Ind_{\dx}$.  

In this section, we give an indication of when the estimation of the support given by $\Ind_{\dx/2}$ is exact for the refined solution.  

Consider the  standard form linear program,
\bq\label{LPstandard}
\begin{aligned}
\text{Minimize   } &  c^\intercal x 
\\ \text{ Subject to: } 
&
\left \{
\begin{aligned}
A x &= b
\\
x &\ge 0.
\end{aligned} 
\right .
\end{aligned}
\eq
Then we have the  following stability result \cite[Chapter 5]{bertsimas1997introduction}.
\begin{theorem}[Stability of linear programming]
If the standard form linear program \eqref{LPstandard} has a unique optimal solution $x^*$, then the support of the optimal solution $x^*(c)$ is unchanged for nearby values $c$. 
If it has  has a nondegenerate  optimal basic solution $x^*$ , then the support of the optimal solution $x^*(b)$ is unchanged for nearby values $b$. 
\end{theorem}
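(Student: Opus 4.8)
The plan is to treat the two assertions separately, exploiting the fact that the cost vector $c$ enters only the objective while the right-hand side $b$ enters only the feasibility constraints. Throughout I recall the basic structure of \eqref{LPstandard}: writing $A$ as an $m \times n$ matrix, a basis is a choice of $m$ linearly independent columns $B$, the associated basic solution sets the nonbasic variables to zero and the basic variables to $x_B = B^{-1} b$, such a solution is feasible when $x_B \ge 0$, and it is optimal when the reduced cost vector $\bar c = c - c_B^\intercal B^{-1} A$ is nonnegative.

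For the second assertion (perturbation of $b$) I would fix an optimal basis $B$ attaining the nondegenerate optimal basic solution $x^*$. The optimality certificate $\bar c \ge 0$ involves only $c$ and $A$, not $b$, so it persists verbatim when $b$ is replaced by a nearby $b'$. Nondegeneracy means $x_B = B^{-1} b > 0$ strictly, so by continuity of the map $b' \mapsto B^{-1} b'$ we retain $B^{-1} b' > 0$ for all $b'$ in a neighbourhood of $b$. Hence the same basis $B$ remains both feasible and optimal, its basic variables stay strictly positive and the nonbasic variables stay zero; the support is exactly the basic index set and is therefore unchanged.

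For the first assertion (perturbation of $c$) I would argue geometrically, since a \emph{unique} optimal solution is cleanest to exploit at the level of the feasible polyhedron, which does not move with $c$. Uniqueness forces the optimal face to be a single point, so $x^*$ is a vertex and $c^\intercal x^* < c^\intercal v$ for every other vertex $v$; moreover, if the polyhedron is unbounded, uniqueness forces $c^\intercal d > 0$ for every nonzero recession direction $d$, since otherwise $x^* + t d$ would furnish a second optimum. The feasible set (a standard-form polyhedron) has finitely many vertices and finitely many extreme rays, so these amount to finitely many strict inequalities in $c$. Each is an open condition, so for $c'$ in a small enough neighbourhood of $c$ all of them persist: $c'$ is still bounded below on the feasible set, the minimum is still attained at a vertex, and $x^*$ is still the strict minimizer. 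Thus $x^*(c') = x^*$ is literally unchanged, and a fortiori so is its support.

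The main obstacle is the first assertion. Translating \emph{unique optimal solution} into a usable strictness condition is delicate: the naive reduced-cost condition $\bar c_N > 0$ characterizes uniqueness only in the absence of degeneracy, so I avoid it and pass instead to the vertex and recession-cone description, where uniqueness does yield clean strict inequalities. The point needing the most care there is the unbounded case, namely ensuring that a perturbation of $c$ cannot create an unbounded descent direction; this is precisely what the strict inequalities over the finitely generated recession cone control.
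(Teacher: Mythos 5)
Your proposal is correct, and it is worth noting that the paper itself does not prove this theorem: it is stated as a known result with a citation to Bertsimas and Tsitsiklis (Chapter 5), so your write-up supplies a self-contained proof where the paper offers only a pointer. Your treatment of the $b$-perturbation coincides with the standard textbook argument behind that citation: fix the optimal basis $B$, note that the reduced-cost certificate $\bar c \ge 0$ does not involve $b$, and use nondegeneracy ($B^{-1}b > 0$) together with continuity of $b' \mapsto B^{-1}b'$ to keep the same basis feasible and optimal with support equal to the basic index set. Your treatment of the $c$-perturbation is where you genuinely improve on a casual reading of the reference: instead of the naive claim that uniqueness gives strictly positive nonbasic reduced costs (false under degeneracy, as you correctly flag), you pass to the vertex/extreme-ray resolution of the standard-form polyhedron, encode uniqueness as finitely many strict inequalities --- $c^\intercal x^* < c^\intercal v$ over the remaining vertices and $c^\intercal r > 0$ over the extreme rays --- and observe that these are open conditions in $c$, which survive perturbation and force $x^*(c') = x^*$ exactly. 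This handles the degenerate case cleanly and yields a conclusion stronger than support-stability. One phrasing to tighten: for a recession direction $d$ with $c^\intercal d < 0$ the contradiction is with optimality of $x^*$ (the objective is unbounded below along $x^* + td$), not the existence of a ``second optimum''; only the case $c^\intercal d = 0$ produces a second optimum. This is cosmetic and does not affect the argument.
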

In this context, a basic solution means the support is equal to the number of rows of $A$, which is assumed to be full row rank.  This corresponds to $n + m-1$ nonzeros for \eqref{LPfull}. 

Consider the following  linear programming problems.
\begin{definition}\label{defn:2LP}
Define \eqref{LPfull} or \eqref{LPsparse} at grid resolution $\dx$ to mean the costs and measures are given by 
$c_{ij}^\dx, \mu^\dx, \nu^\dx$ using~\eqref{discrete measures} and~\eqref{discrete costs}.   
 Write $\pi^\dx$, $\pi^{\dx/2}$ for an optimal  solution of  \eqref{LPfull}  at resolution $\dx$, $\dx/2$, respectively.   Write $\pi^{\dx/2}_\Ind$ for the solution of \eqref{LPsparse} at resolution $\dx/2$ using the indices $\Ind = \Ind_{\dx/2}$ given by support estimation procedure, as in Step~\eqref{step:refine support}.
\end{definition}

In the following lemma we show that there is a linear program with data close to that for \eqref{LPfull}  at scale $\dx/2$, which has the correct support $\Ind_{\dx/2}$ obtained from~$\pi^\dx$. 
\begin{lemma}
Consider the linear programming solution $\pi^{\dx/2}$.  There is a perturbation of the cost function 
$\hat  c^{\dx/2}$, for which the support of the solution, $\hat \pi^{\dx/2}$,  of the perturbed problem is contained in the indices $\Ind_{\dx/2}$ obtained using Step~\eqref{step:refine support} from the coarse solution $\pi^\dx$.
The perturbed cost can be made arbitrarily close to $c^{\dx/2}$ by taking $\dx\to 0$.
\end{lemma}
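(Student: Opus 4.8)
The plan is to produce the perturbed cost from a duality certificate built out of the coarse solution. Since \eqref{LPfull} at scale $\dx$ is a feasible, bounded finite linear program, strong duality supplies optimal Kantorovich potentials $(u^\dx_i, v^\dx_j)$ for $\pi^\dx$, satisfying $u^\dx_i + v^\dx_j \le c^\dx_{ij}$ for all $(i,j)$ together with the complementary slackness equality $u^\dx_i + v^\dx_j = c^\dx_{ij}$ whenever $\pi^\dx_{ij} > 0$. First I would inherit these potentials to the refined grid by setting $\hat u_{i'} = u^\dx_i$ and $\hat v_{j'} = v^\dx_j$ whenever $x^{\dx/2}_{i'}$, $y^{\dx/2}_{j'}$ are the children of $x^\dx_i$, $y^\dx_j$, as produced in Step~\eqref{step:refine support}.

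Next I would exhibit an explicit feasible plan at scale $\dx/2$ supported only on the children of the coarse support. Writing $i'\prec i$ for ``$i'$ is a child of $i$'', set
\[
\hat\pi^{\dx/2}_{i'j'} = \pi^\dx_{ij}\,\frac{\mu^{\dx/2}_{i'}}{\mu^\dx_i}\,\frac{\nu^{\dx/2}_{j'}}{\nu^\dx_j}, \qquad i'\prec i,\ j'\prec j,\ \pi^\dx_{ij}>0,
\]
and $\hat\pi^{\dx/2}_{i'j'}=0$ otherwise. Using $\sum_{i'\prec i}\mu^{\dx/2}_{i'}=\mu^\dx_i$ and $\sum_{j'\prec j}\nu^{\dx/2}_{j'}=\nu^\dx_j$ (the children hypercubes tile the parent), a direct computation shows the marginals of $\hat\pi^{\dx/2}$ are exactly $\mu^{\dx/2}$ and $\nu^{\dx/2}$. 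Thus $\hat\pi^{\dx/2}$ is feasible and its support lies among the children of the coarse support pairs, hence inside $\Ind_{\dx/2}$. I would then define the perturbed cost so that $(\hat u,\hat v)$ and $\hat\pi^{\dx/2}$ form a matched primal--dual pair: set $\hat c^{\dx/2}_{i'j'} = \hat u_{i'}+\hat v_{j'}$ on the support of $\hat\pi^{\dx/2}$, and $\hat c^{\dx/2}_{i'j'}=\max\{c^{\dx/2}_{i'j'},\,\hat u_{i'}+\hat v_{j'}\}$ on every remaining entry. By construction $(\hat u,\hat v)$ is dual feasible for $\hat c^{\dx/2}$, since $\hat c^{\dx/2}_{i'j'}\ge \hat u_{i'}+\hat v_{j'}$ holds everywhere, and complementary slackness holds between $(\hat u,\hat v)$ and $\hat\pi^{\dx/2}$. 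Hence both are optimal for the perturbed problem, so $\hat\pi^{\dx/2}$ is an optimal solution whose support is contained in $\Ind_{\dx/2}$, as required.

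It remains to bound the perturbation. On the support of $\hat\pi^{\dx/2}$ the parent pair gives $\hat u_{i'}+\hat v_{j'}=u^\dx_i+v^\dx_j=c^\dx_{ij}=c(x^\dx_i,y^\dx_j)$, while $c^{\dx/2}_{i'j'}=c(x^{\dx/2}_{i'},y^{\dx/2}_{j'})$ with the refined points at distance $O(\dx)$ from their parents; on the remaining entries $0\le \hat c^{\dx/2}_{i'j'}-c^{\dx/2}_{i'j'}=\max\{0,\ \hat u_{i'}+\hat v_{j'}-c^{\dx/2}_{i'j'}\}$, where $\hat u_{i'}+\hat v_{j'}=u^\dx_i+v^\dx_j\le c^\dx_{ij}=c(x^\dx_i,y^\dx_j)$. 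In both cases $\abs{\hat c^{\dx/2}_{i'j'}-c^{\dx/2}_{i'j'}}$ is dominated by $\abs{c(x^\dx_i,y^\dx_j)-c(x^{\dx/2}_{i'},y^{\dx/2}_{j'})}$, which tends to $0$ uniformly as $\dx\to 0$ by the uniform continuity of $c$ on the compact set $X\times Y$. This yields $\norm{\hat c^{\dx/2}-c^{\dx/2}}_\infty\to 0$.

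The step I expect to be the main obstacle is the construction of a feasible refined plan supported \emph{only} on the children of the coarse support: forcing tightness of the dual constraint there is exactly what keeps the perturbation equal to the continuity error, rather than to the (possibly non-vanishing) dual slack at neighbouring cells, so it is essential that the conditional--product plan above be simultaneously feasible and supported on precisely those children. The remaining delicate point, which the surrounding text already flags, is that the modulus of continuity estimate controls the perturbation at a fixed refinement ratio, whereas the sensitivity of the coarse program may itself degrade as $\dx\to 0$; establishing a uniform lower bound on that slack would require the nondegeneracy hypotheses of the stability theorem and is not pursued here.
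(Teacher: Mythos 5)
Your proposal is correct, and it reaches the lemma by a genuinely different route than the paper. The paper's proof is duality-free: it defines a single ``doubled'' linear program at scale $\dx/2$ whose cost replicates the coarse cost on all child pairs, $\hat c^{\dx/2}_{\hat x_i \hat y_j} = c^{\dx}_{x_i y_j}$, observes that aggregating a feasible fine plan over children gives a feasible coarse plan of equal cost (and, conversely, splitting mass over the children --- essentially your product--conditional plan --- goes the other way), concludes that the doubled program's solution is a splitting of $\pi^\dx$ and hence supported in $\Ind_{\dx/2}$, and bounds $\hat c^{\dx/2} - c^{\dx/2}$ by uniform continuity exactly as you do. You instead lift the optimal coarse potentials to the children and design $\hat c^{\dx/2}$ so that the product--conditional splitting and the lifted potentials form a complementary-slackness pair, certifying optimality directly. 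The two perturbed costs coincide on the support of the fine plan (both equal $c^{\dx}_{ij}$ there, by tightness of the coarse dual constraint wherever $\pi^\dx_{ij}>0$), but differ off it: the paper replaces \emph{every} entry by the parent cost, whereas your $\max$ construction raises an entry only when the lifted dual constraint forces it, so your perturbation is pointwise no larger. Your route also buys rigor at the one spot where the paper is loose: when the coarse LP has several optima, an arbitrary solution of the doubled program aggregates to \emph{some} optimal coarse plan, not necessarily to $\pi^\dx$, so strictly speaking the paper only shows that some optimal solution of its perturbed problem is supported in $\Ind_{\dx/2}$; your explicit primal--dual pair exhibits such a solution together with its certificate, at the price of invoking LP duality rather than the elementary aggregation argument. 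Finally, your closing caveat --- that the continuity bound on the perturbation does not control the coarse program's sensitivity as $\dx \to 0$ --- is precisely the disclaimer the paper itself makes in \S\ref{sec:proofGrid}, and lies outside what the lemma claims.
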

%

\begin{proof}
Define an auxiliary  linear program from \eqref{LPfull} at scale $\dx$ as follows. 
On the grid of scale $\dx$, each grid point $x_i$ or $y_j$ is split into $2^d$ children.  Denote an arbitrary child of $x_i$ by $\hat x_i$, and similarly for $y_j$.   Define the ``doubled''  linear program \eqref{LPfull}, on the grid at scale $\dx/2$,  as follows.  The measures are still given by $\mu^{\dx/2}$ and $\nu^{\dx/2}$.  
The cost function is given by 
\[
\hat c_{\hat x_i, \hat y_j}^{\dx/2} = c_{x_i, y_j}^\dx,
\qquad
\text{ for any child grid points $\hat x_i, \hat y_j$ of $x_i, y_j$}
\]
Observe from the definition \eqref{discrete measures}, and from the definition of the child points $\hat x_i$ of $x_i$, that 
\bq\label{mass children}
\mu_{x_i}^\dx = \sum_{\hat x_i}  \mu^{\dx/2}_{\hat x_i},
\qquad
\nu_{y_i}^\dx = \sum_{\hat y_i}  \nu^{\dx/2}_{\hat y_i}.
\eq

Now having defined \eqref{LPfull} with the data above, 
let $\hat \pi^{\dx/2}$ be a solution.   Note that there are multiple solutions, since pairs of child vertices have equal costs.  
Recall that $\pi^\dx$ from Definition~\ref{defn:2LP} is the solution of \eqref{LPfull} at scale $\dx$. 
Observe that  $\hat \pi^{\dx/2}$ is a splitting of  $\pi^\dx$, in the sense that 
\[
\pi^\dx_{x_i, y_j}  = \sum_{\hat x_i, \hat y_j} \hat\pi^{\dx/2}_{\hat x_i \hat y_j}  
\]
where the sum is over all child grid points $\hat x_i$ of $x_i$ and $ \hat y_j$ of $y_j$.  This follows from the fact that the cost $\hat c$ is constant over pairs of child grid points, and also, since the mass of the marginals is split amongst the children \eqref{mass children}.

So the solution of the linear program, $\hat \pi^{\dx/2}$,  corresponds to a redistribution of the marginal $\pi^{\dx}$ to to child vertices based on the refinement of the measures.
This means that the support of $\hat \pi^{\dx/2}$ is contained in $\Ind_{\dx/2}$ from Step~\eqref{step:refine support}.  (It could be smaller, which could arise, for example, if some $\mu_{\hat x_i} = 0$.)

Next observe  that \eqref{LPfull} at resolution $\dx/2$ and the ``doubled'' linear program  are in the standard form \eqref{LPstandard} with the same values for $A$ and $b$, but different values of~$c$. 

Recalling the definition of the costs,~\eqref{discrete costs}, at any point $x_i, y_j$, they differ by at most $c(x_i, y_j) - c(x_i + x, y_j + y)$ where $\abs{x}_\infty, \abs{y}_\infty \le \dx/2$.  Since by assumption the cost function is continuous, this difference can be made arbitrarily small. 
\end{proof}

By the stability theorem for linear programming, if $\pi^{\dx/2}$ is the unique optimal solution,  of \eqref{LPfull} at scale $\dx/2$, and if $\hat c^{\dx/2}$ is close enough to $c^{\dx/2}$, then the support of $\pi^{\dx/2}$ is the same as the support of $\hat \pi^{\dx/2}$, which  is contained in $\Ind_{\dx/2}$.   So the sparse linear programming solution $\pi^{\dx/2}_\Ind$ has the same support as $\hat \pi^{\dx/2}$, and, since the data is the same for  $\pi^{\dx/2}_\Ind$ and  $\pi^{\dx/2}$, 
\[
\pi^{\dx/2}_\Ind= \pi^{\dx/2}. 
\]
To summarize,  in the case where we can apply the stability theorem, as above, we can conclude that the solution of the sparse linear program \eqref{LPsparse} with support $\Ind_{\dx/2}$  is equal to the solution of the full problem \eqref{LPfull} at resolution $\dx/2$.

\section{Numerical results}\label{sec:numerics}

\subsection{Performance}\label{sec:performance}
We used MATLAB to generate the full and sparse LP problems and to call the LP solver.  We tried several LP solvers, and found that several commercial products (MOSEK, Gurobi, CLPEX) performed equally well.  We called these products from CVX, and also called them directly from MATLAB.

For benchmarking performance we used the example  of transporting uniform densities from a square to diamond, see Figure~\ref{fig:CompareCircles}.  The LPs were generated in MATLAB using Gurobi as a solver.   Table~\ref{table:OTPerformance} compares computation time and memory usage of the multigrid LP solver the full LP solver and the entropy method, implemented using sample code provided by the authors of \cite{benamou2015iterative}.  Computation time and memory usage grow linearly with the number of unknowns, $2N^{2}$, for the Linear Program, as shown in Figure~\ref{fig:OTPerformance}. We used a PC laptop (i3 1.9GHz CPU with 12GB RAM).  For comparison, we also ran the example with $N=512$ on a 2015 MacBook Air (2.2GHz intel i7 CPU with 8GB RAM) and the run time decreased by about 30\%.

For the full solver,  the memory usage with $N = 64$ or $8192$ variables was 7 GB and the computation time was nearly 527 seconds.  In comparison, the multigrid method was an improvement in both time and memory  of two orders of magnitude. The largest sized problem we computed using the multiscale solver corresponded to $N=512$, or about half a million variables.  Compared to the largest problem for the full solver, this is an increase of problem size two orders of magnitude.   The performance of the entropy method was faster than the full solver, by about one order of magnitude, and allowed for a problem size of about double the maximum problem size for the full LP solver.  Compared to the entropy method at the largest problem size, $N=96$, or $18432$ variables, the multiscale solver was about 50 times faster, and used about 40 times less memory.
A comparable run time corresponded to a problem size of $N=512$ or $524288$ variables.
The method in  \cite{solomon2015convolutional} introduced performance improvements over the implementation of \cite{benamou2015iterative} as well as hardware improvement.  They used a computer with 23.5GB RAM, implemented on a GPU, (which is significantly faster than a CPU), the run times reported here, which were limited to smaller problem sizes, were comparable to ours.


\begin{table}[h]
\centering
\begin{tabular}{rrrrrrrr}
\multirow{2}{*}{N (grid)} & \multirow{2}{*}{$2N^2$} & \multicolumn{2}{c}{MGLP} & \multicolumn{2}{c}{FLP} & \multicolumn{2}{c}{ER} \\
 &  & CPU & Memory & CPU & Memory & CPU & Memory \\
32 & 2048 & 0.9 & 14 		& 14.0 & 400 & 3.8 & 900 \\
48 & 4608 & 1.6 & 40 		& 95.6 & 2000 & 20.1 & 1200 \\
64 & 8192 & 3.2 & 80 		& 527.1 & 7000 & 64.4 & 1900 \\
96 & 18432 & 7.0 			& 160  &$*$  & $*$ 	& 310.1 & 6200 \\
128 & 32768 & 13.5 & 300 		&      	& 		& $*$ & $*$ \\
192 & 73728 & 35.3 & 700 	&      	&   &      	& \\
256 & 131072 & 58.9 & 1100 	&    	&   &      	&\\
384 & 294912 & 165.5 & 2500 	&   	&   &      	&\\
512 & 524288 & 287.6 & 4000 	&   	&   &      	&
\end{tabular}
\caption{
Comparison of run time and memory usage for the multigrid LP (MGLP), the full LP (FLP), and the entropic regularization (ER) method, the latter with precision to $10^{-4}$ and regularization parameter $\epsilon = 10^{-3}$ (\cite{benamou2015iterative}). Precision for the LP solvers are both $10^{-8}$. Memory usage for the LP solvers are as reported by Gurobi, and is estimated for the ER method.
}
\label{table:OTPerformance}
\end{table}

\begin{figure}[ptb]
\centering
\begin{minipage}{0.5\linewidth}
\includegraphics[width=\linewidth]{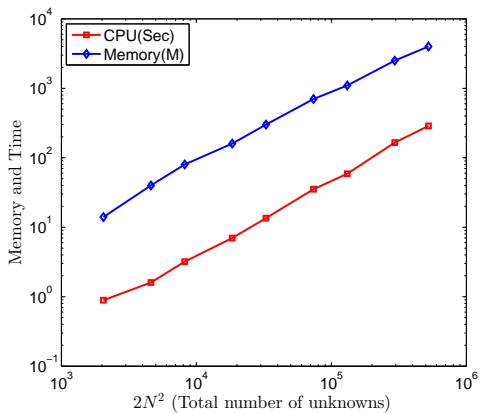}
\end{minipage}
\caption{CPU time and memory usage of the multiscale linear programming
solver. Both memory usage and computation time scale linearly with problem size.}%
\label{fig:OTPerformance}%
\end{figure}

\subsection{Smoothing the density}
The target density obtained by the push forward of the barycentric projection of $\pi^\dx$ is still given as a weighted sum of Dirac masses.    We can improve the accuracy of the result by smoothing the density, in other words by simply replacing each Dirac mass by a sum of Gaussians with standard deviation on the same scale as $\dx$.  We took values between 5 and 9 multiples of $\dx$.     More general methods for Kernel Density Estimation in the context of statistical inference are discussed in~\cite{silverman1986density}.

Figure~\ref{fig:CompareCircles} is a comparison of optimal transport
map before and after applying Gaussian filter using the map between uniform densities on a square and a diamond, discussed further in \S\ref{ssec:OTnum}.  The second part of the figure shows smoothing of a computed  barycenter, see \S\ref{sec:barcyenterN}.

\begin{figure}[ptb]
\centering\begin{minipage}{0.8\linewidth}
\includegraphics[width=0.5\linewidth]{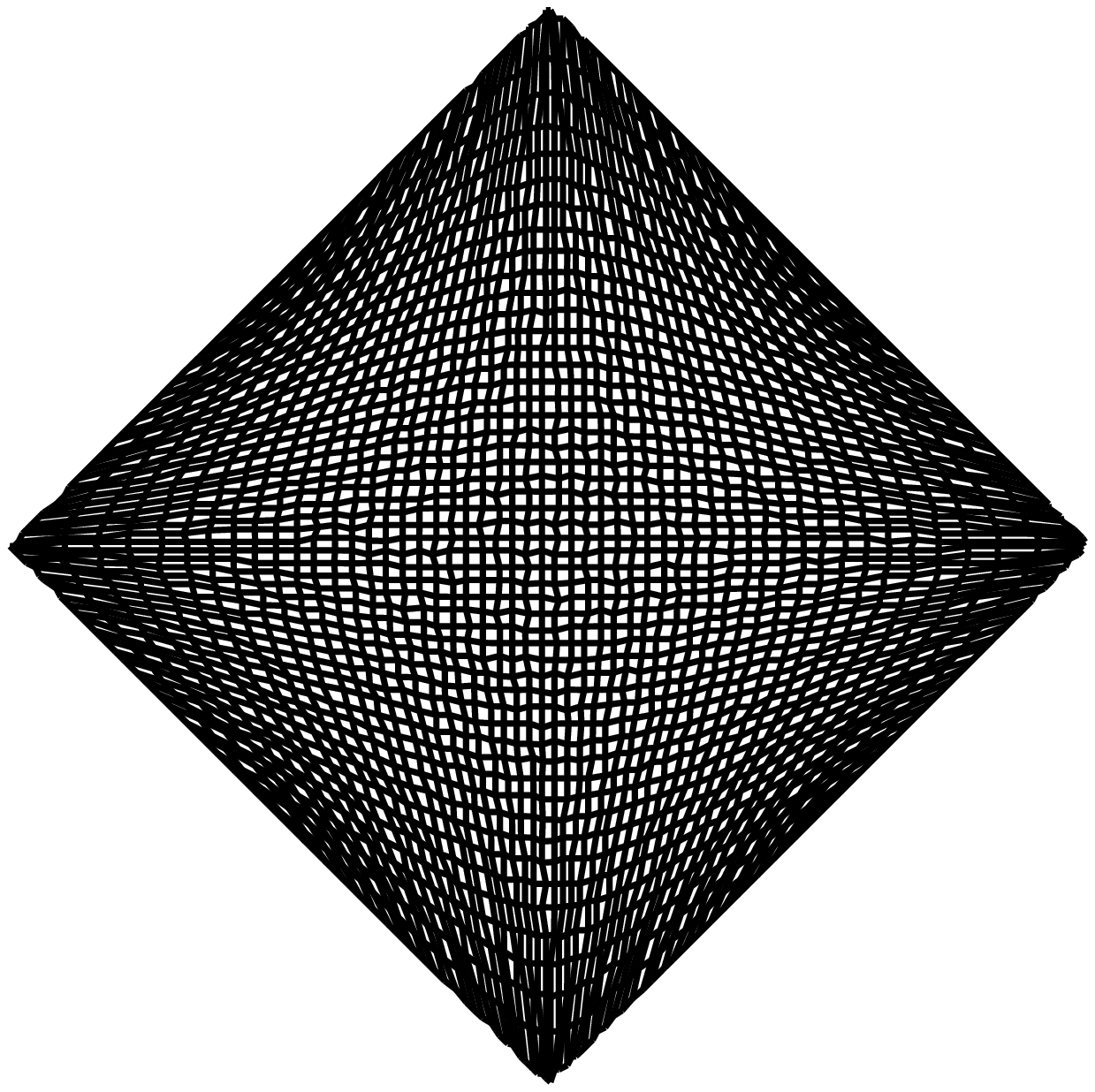}
\quad
\includegraphics[width=0.5\linewidth]{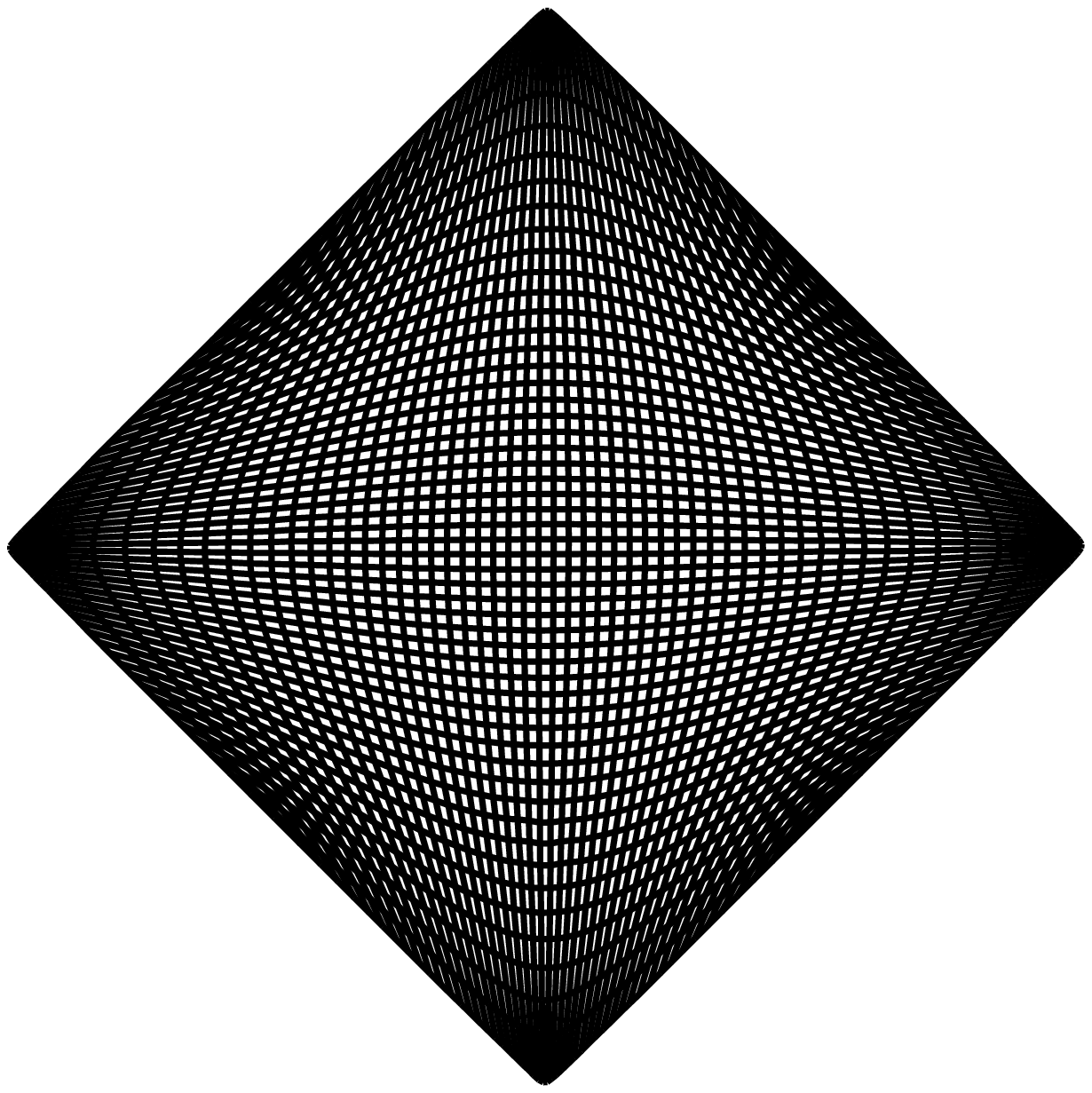}
\end{minipage}
\begin{minipage}{\linewidth}
\includegraphics[width=0.5\linewidth]{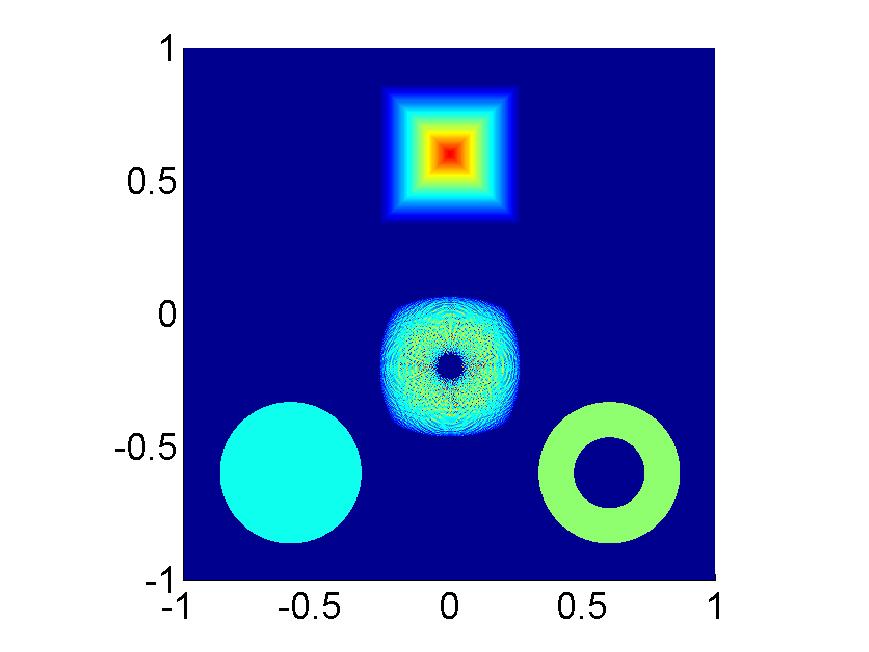}
\includegraphics[width=0.5\linewidth]{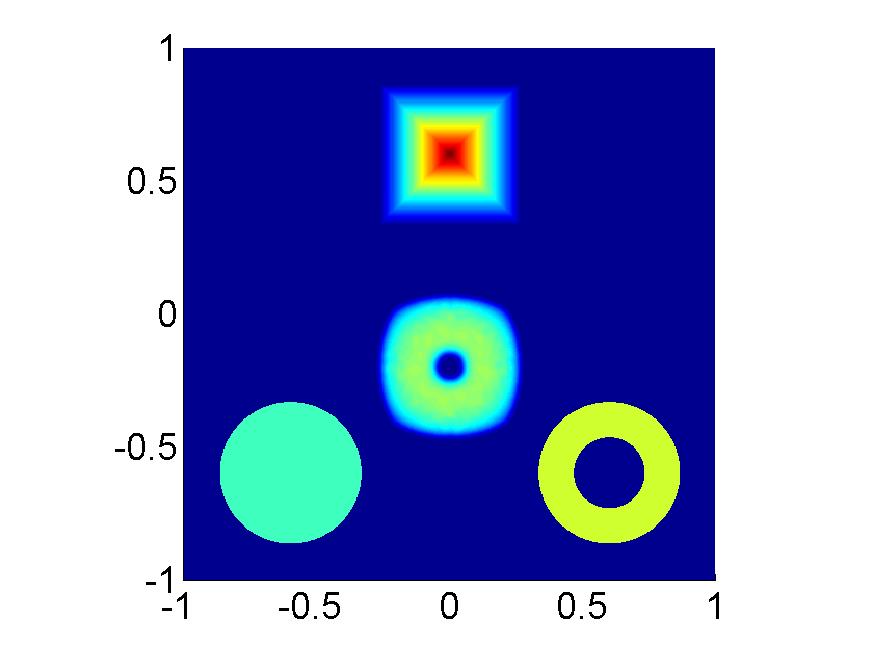}
\end{minipage}
\caption{Top: Optimal transport map from square to diamond recovered
by barycentric projection. Left: Before smoothing. Right: After smoothing.
Bottom: Barycenter of a square and $2$ Cylinders. Left: Before smoothing.
Right: After smoothing.
}%
\label{fig:CompareCircles}%
\end{figure}\ \ \ \ \ \ \

\subsection{Accuracy of the solutions}\label{sec:accuracy}
In this section we report on  the accuracy of solutions 
 in terms of the grid spacing $\dx$, by considering exact solutions to the Monge-Amp\`ere PDE.  These examples were used in~\cite{Benamou2014107}.
The first example is for a variable density.  The second example the well-known example of Caffarelli which splits a circle.  In the case the mapping consists of two translations. In these examples, both maximum and $L_{2}$ error decrease approximately linearly with grid size  which is comparable with the results in~\cite{Benamou2014107}.  The solutions are plotted in Figure~\ref{fig:Comparison}.

\begin{example}\label{ex:33}
\label{ExSquareWithDensity}Transportation between two squares of the same
size. The target has constant density, while the source square (Figure~\ref{fig:Comparison}) has a density given by
\[
f\left(  x,y\right)  =1+4\left(  q^{\prime\prime}(x)q(y)+q(x)q^{\prime\prime
}(y)\right)  +16\left(  q(x)q(y)q^{\prime\prime}(x)q^{\prime\prime
}(y)-q^{\prime}(x)^{2}q^{\prime}(y)^{2}\right)  ,
\]
where
\[
q(z)= \left (-\frac{1}{8\pi}z^{2}+\frac{1}{256\pi^{3}}+\frac{1}{32\pi} \right)
\cos(8\pi z)+\frac{1}{32\pi^{2}}z\sin(8\pi z);
\]
The optimal transportation map is given by%
\[
\left\{
\begin{array}
[c]{c}%
u_{x}\left(  x,y\right)  =x+4q^{\prime}(x)q(y)\\
u_{y}\left(  x,y\right)  =y+4q(x)q^{\prime}(y)
\end{array}
\right.
\]
The accuracy of the solution is presented in Table~\ref{table:CompareSquaresTable}.
\end{example}

\begin{example}\label{ex:34}
\label{ExSplittingCircle}Figure \ref{fig:Comparison} shows a circle is
split into two half circles. The potential is given by
\[
V(x_{1},x_{2})=\frac{1}{2}(\left\vert x_{1}\right\vert +x_{1}^{2}+x_{2}^{2});
\]
The accuracy of the solution is presented in Table~\ref{table:CompareSquaresTable}.
\end{example}

\begin{table}[h]%
\begin{tabular}
[c]{rlllll}%
$N$ (gridsize) & 32 & 64 & 128 & 256 & 512\\\hline
Max error Ex~\ref{ex:33} & 0.00721 & 0.00892 & 0.00689 & 0.00241 & 0.00148\\
$L_{2}$ error Ex~\ref{ex:33} & 0.00385 & 0.00379 & 0.00257 & 0.00103 & 0.00047\\
Max error Ex~\ref{ex:34} & 0.0625 & 0.03125 & 0.01563 & 0.00781 & 0.00391\\
$L_{2}$ error Ex~\ref{ex:34} & 0.01211 & 0.00302 & 0.00148 & 0.00050 & 0.00018
\end{tabular}
\caption{Accuracy of the solution for Example~\ref{ex:33} and~\ref{ex:34}.}
\label{table:CompareSquaresTable}
\end{table}

\begin{figure}[ptb]
\centering
\begin{minipage}{0.32\linewidth}
\includegraphics[width=\linewidth]{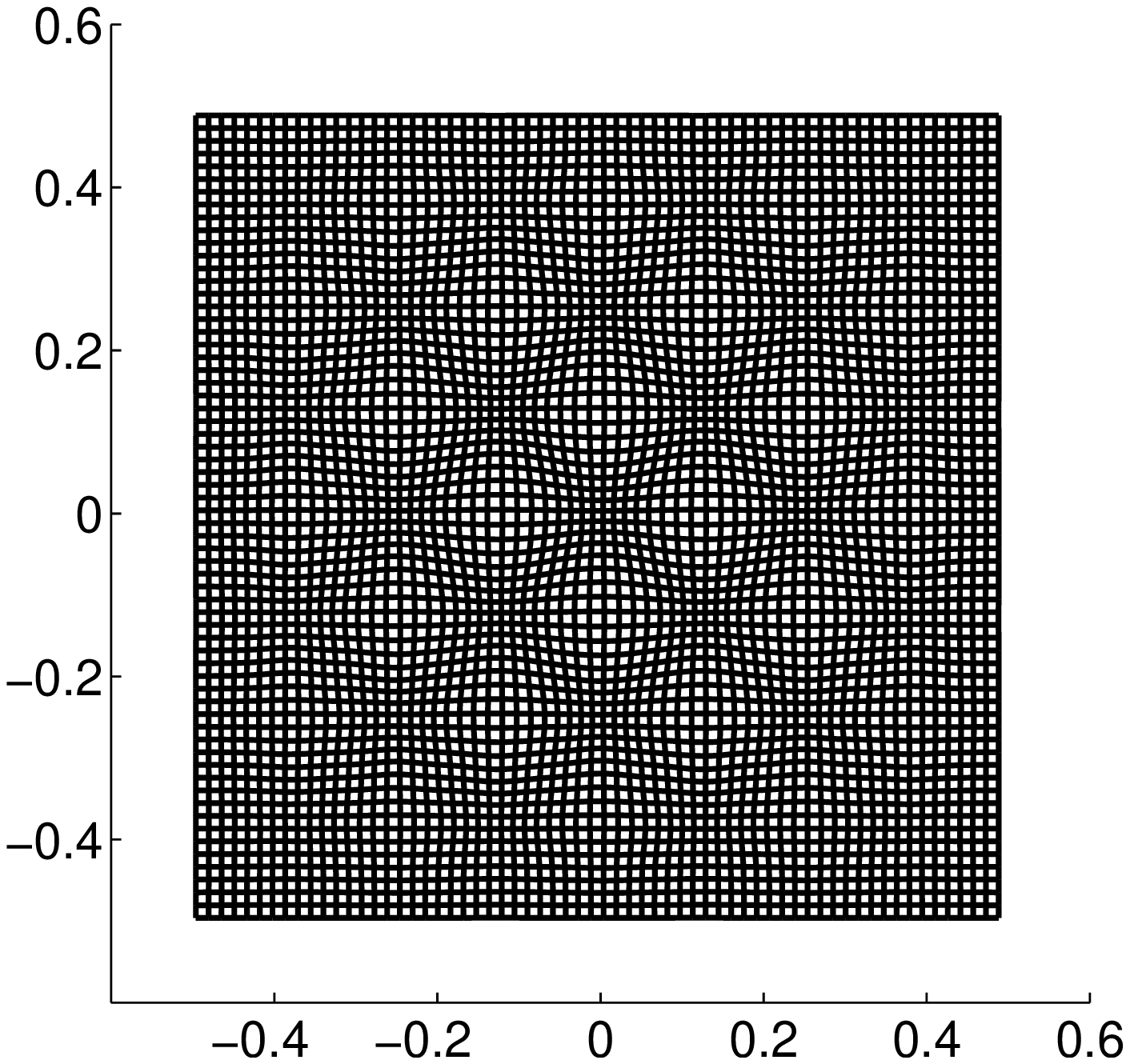}
\end{minipage}
\quad\begin{minipage}{0.64\linewidth}
\includegraphics[width=.5\linewidth]{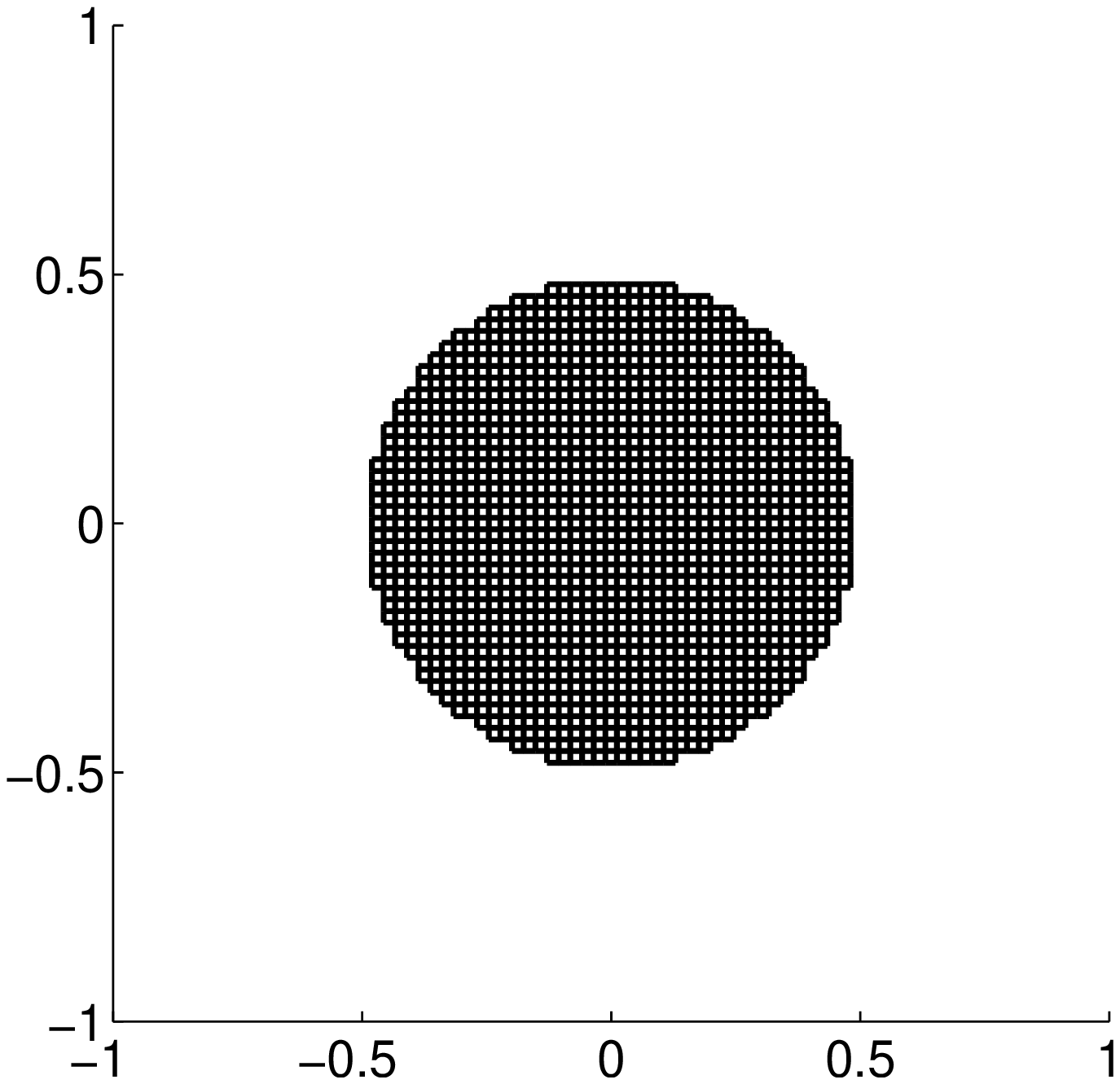}
\nolinebreak%
\includegraphics[width=.5\linewidth]{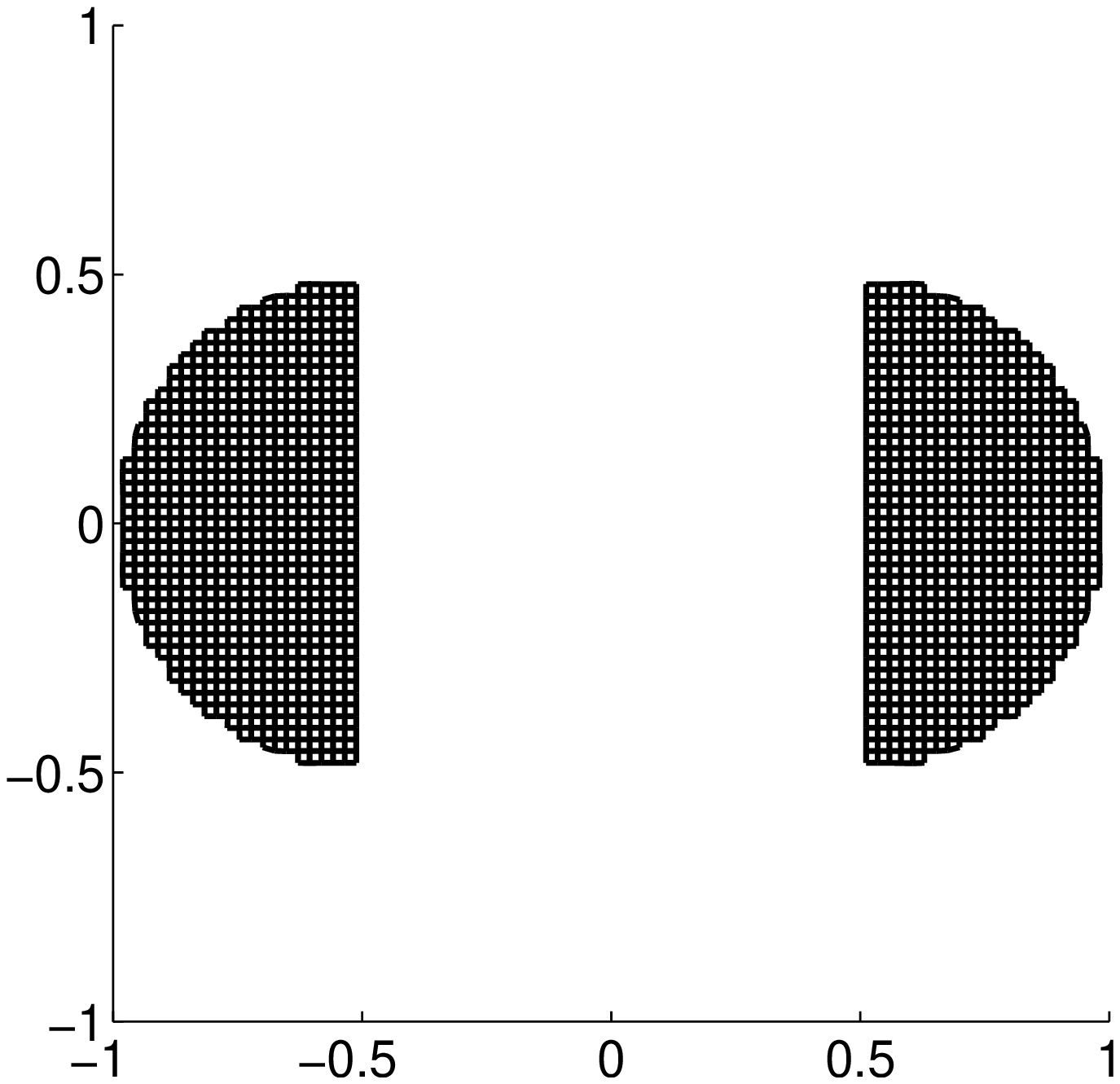}
\end{minipage}
\caption{Optimal transport map recovered by barycentric projection. Left:
Target of Example \ref{ExSquareWithDensity}. Middle: Source of Example
\ref{ExSplittingCircle}. Right: Target of Example \ref{ExSplittingCircle}}%
\label{fig:Comparison}%
\end{figure}

\subsection{Numerical solutions of Optimal Transportation Problem}\label{ssec:OTnum}
Visualizing OT maps requires care.  Our strategy for visualizing maps begins, following \cite{Benamou2014107}, by taking the source domain to be a square with uniform density,  and by pushing forward the image of the grid lines onto the target domain.   Subsequently, we consider more complicated source domains, still mapping horizontal and vertical lines from the target.   We also label points in the source as well as their images for clarification. 
We suggest spending time  inspecting the images. 

We begin by computing examples of for the Optimal Transportation problem with quadratic cost $c(x,y) = \abs{x-y}^2$.   With restrictions on the measures explained in Example~\ref{ex:MongeAmpere}, the solution is given by solving the Monge-Amp\`ere Partial Differential Equation. These maps were computed in \cite{Benamou2014107}, and we refer to this paper for visualizations of maps to convex targets.   The first examples are uniform densities.  Example~\ref{exOT1} shows the optimal  map from a square to a non-convex target.  
Example~\ref{exOT11} shows the optimal map from a square to a diamond. 
Example~\ref{exOT2} shows a map between two non-convex domains.  

In Example~\ref{exOT3} , we consider costs of the form $c(x,y) = \abs{x-y}^p$, and consider  a piecewise constant  density on the source rectangle.

\begin{example}\label{exOT1}
First consider the OT problem with $\mu$ the uniform density on a square and $\nu$  to the uniform density on the  ``Pac-Man'', a  circle with a section of $\pi/2$ radians removed.  The mapping is presented in~Figure~\ref{fig:pacman}.
The map is discontinuous on a line segment market $OG$ on the square, which  is mapped to two lines in the target, from the center $O$ to halfway
down the radial segments.   The map is continuous on the rest of the domain.
Near the points $E,F$ in the source, vertical lines of
the form $x=c$ are mapped to corners, lines of the form $x=|y|+c$.
\end{example}

\begin{example}\label{exOT11}
Figure~\ref{fig:CompareCircles} shows the optimal map from $\mu$ the uniform density on a  square, $X$,  to $\mu$, the uniform density on a  diamond.  The source square is not plotted, but it can be seen in~Figure~\ref{fig:pacman}.  The map shares the symmetries of the target.  But while the central horizontal and vertical lines are mapped to straight lines, lines on the boundary of the square bend to the  corners.  Squares near the center of $X$  are mapped to squares, while squares need the edges of $X$ are mapped to rectangular shapes with large aspect ratios. 
\end{example}

\begin{example}\label{exOT2}
Refer to Figure~\ref{fig:pacman}.
The source domain, $X$ is given by a square with a medium  square removed from  the center, and with smaller squares removed from the four corners.
The target domain, $Y$,  is the set $\{ \abs{x}^{4/5} + \abs{y}^{4/5} \le 1 \}$ with an inner diamond shape removed, given by  $\{ \abs{x} + \abs{y} \le 9/25 \}$.

We mapped the vertical and horizontal lines in the source domain to the target domain.   The problem was solved on a $512^{2}$ grid, but plotted on a $256^{2}$ grid for clarity.  We indicated the images of eight points, the inner and outer diagonal corners of $X$.   Both  shapes are  symmetric about diagonal lines, and this symmetry is preserved by the map.  The inner points in the target domain show discontinuities in the inverse map; points from far away in the source are mapped to points near, for example,  $A^\prime$.  The dent near $A^\prime$ indicates the discontinuity: as more lines are used in the plot  the  size of the dent decreases.
\end{example}

\begin{figure}[tb]
\includegraphics[width=1.2\linewidth]{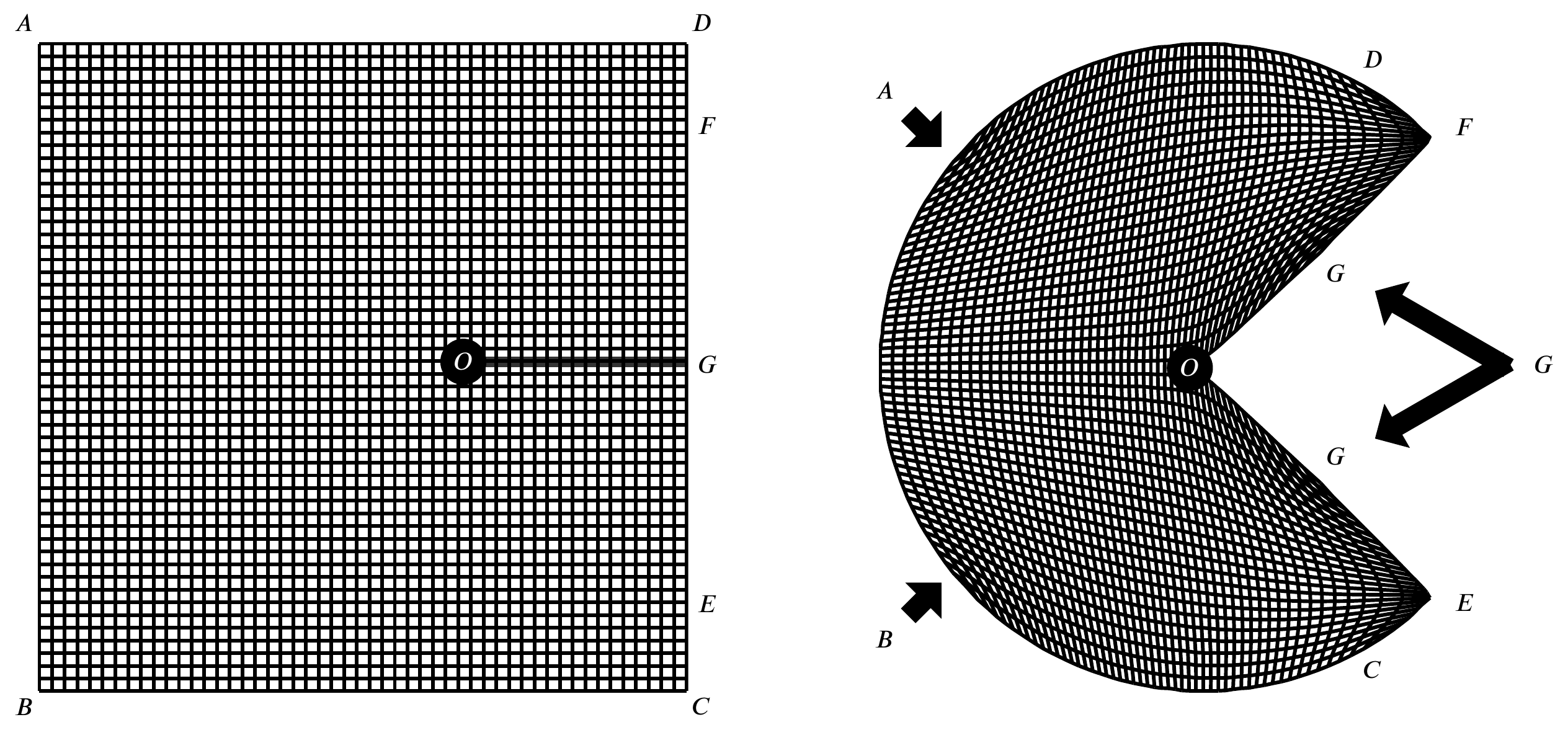}
\includegraphics[width=1.2\linewidth]{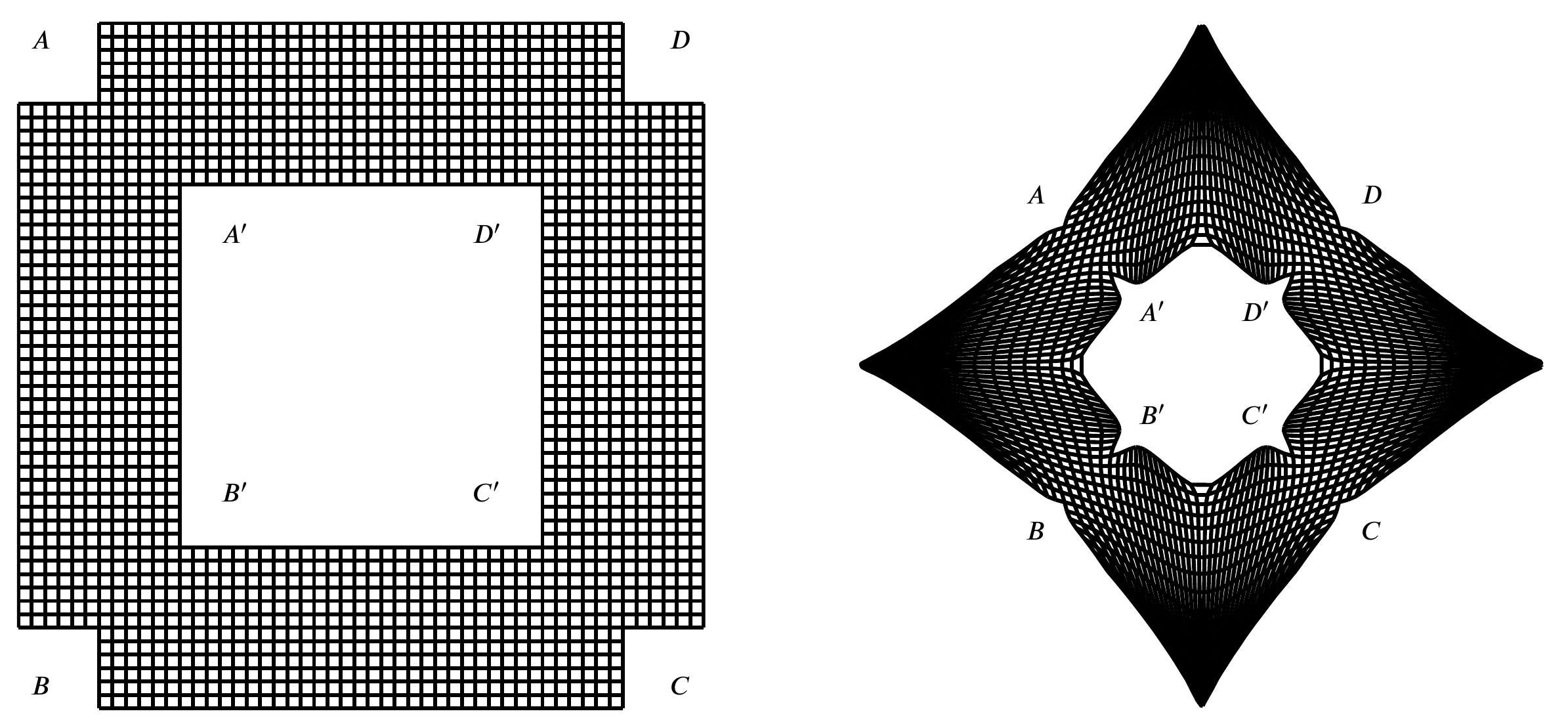}
\caption{Top: Optimal map from the square to ``Pac-Man''.   Bottom: Optimal map between two non-convex shapes.}%
\label{fig:pacman}%
\end{figure}

\begin{figure}[ptb]
\includegraphics[width=.49\linewidth]{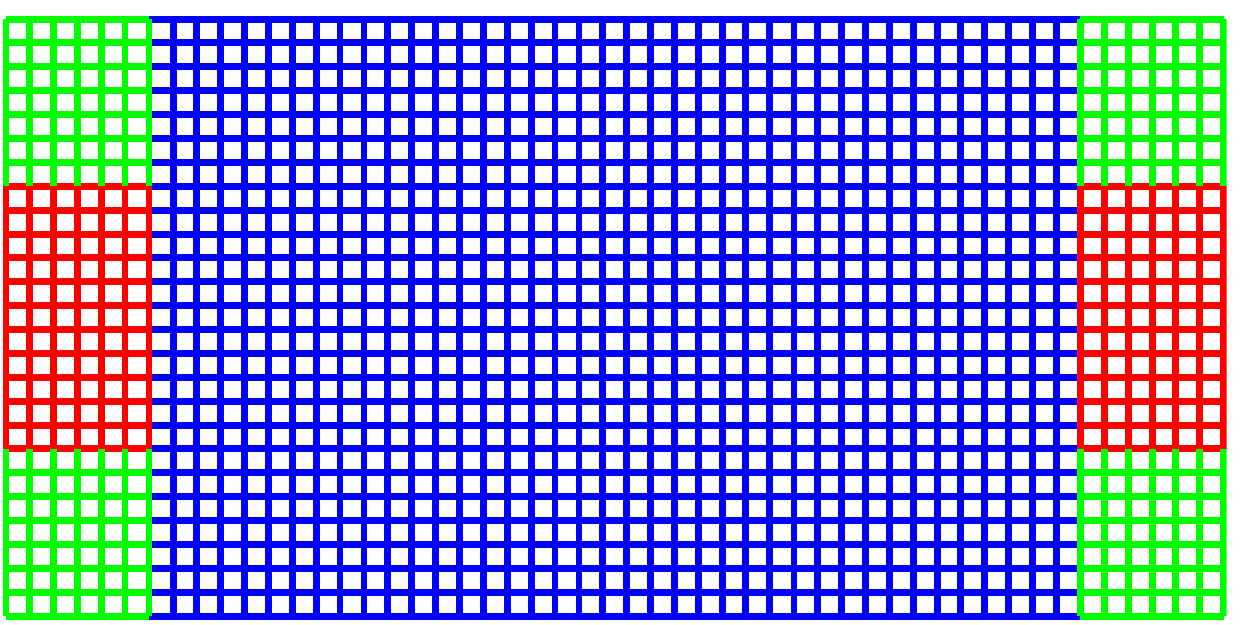}

\includegraphics[width=0.49\linewidth]{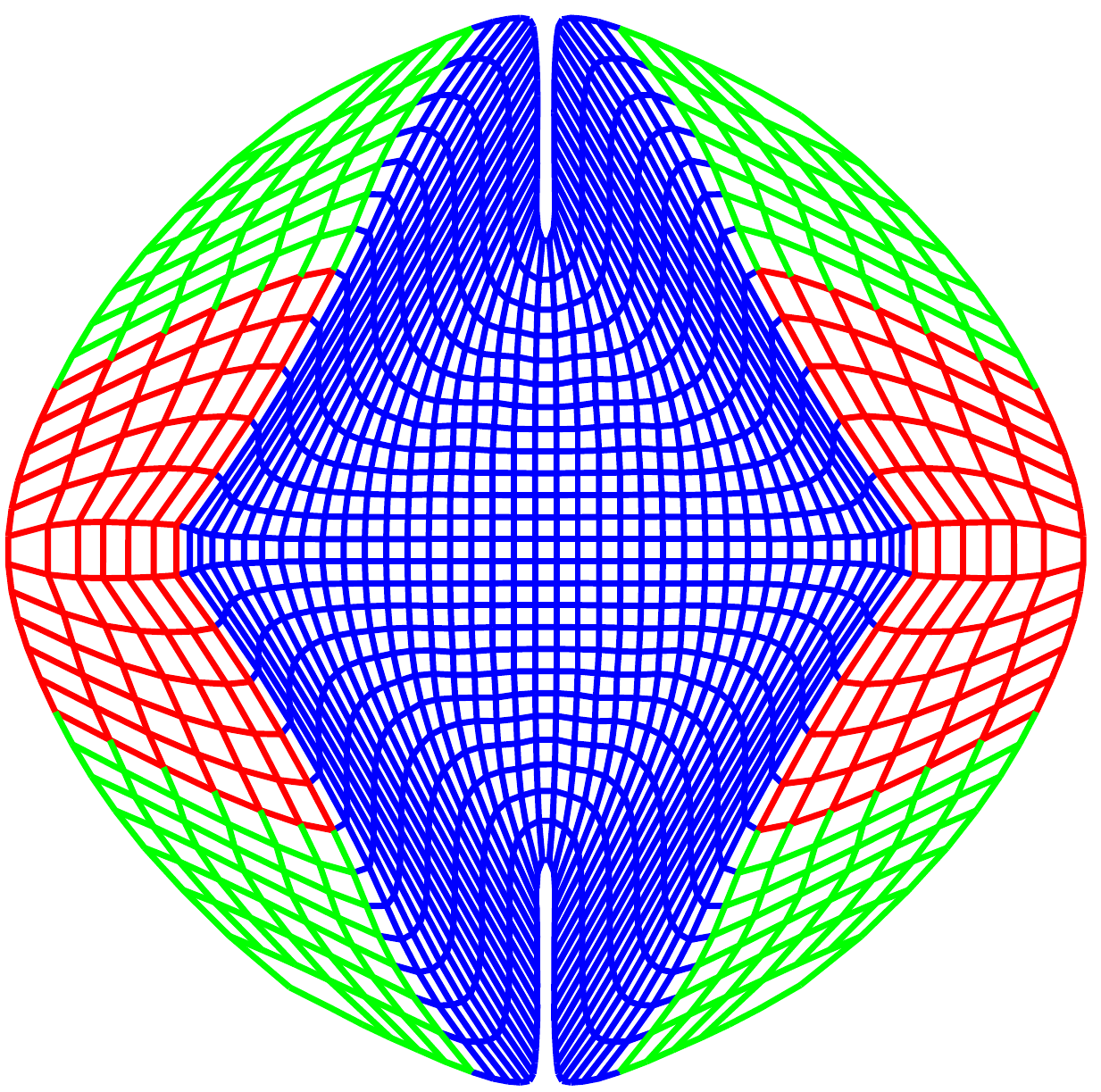}
\includegraphics[width=0.49\linewidth]{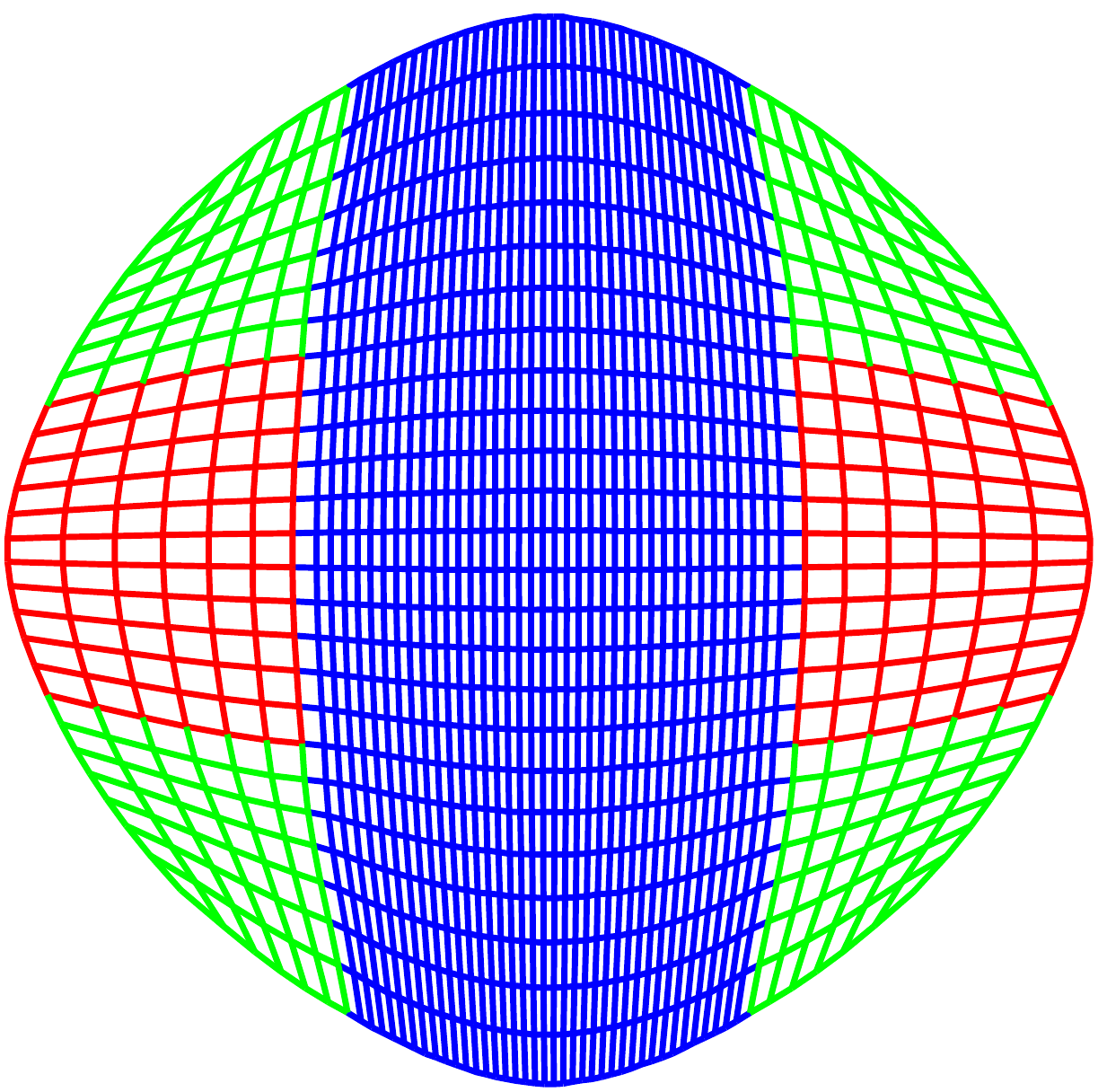}
\includegraphics[width=0.49\linewidth]{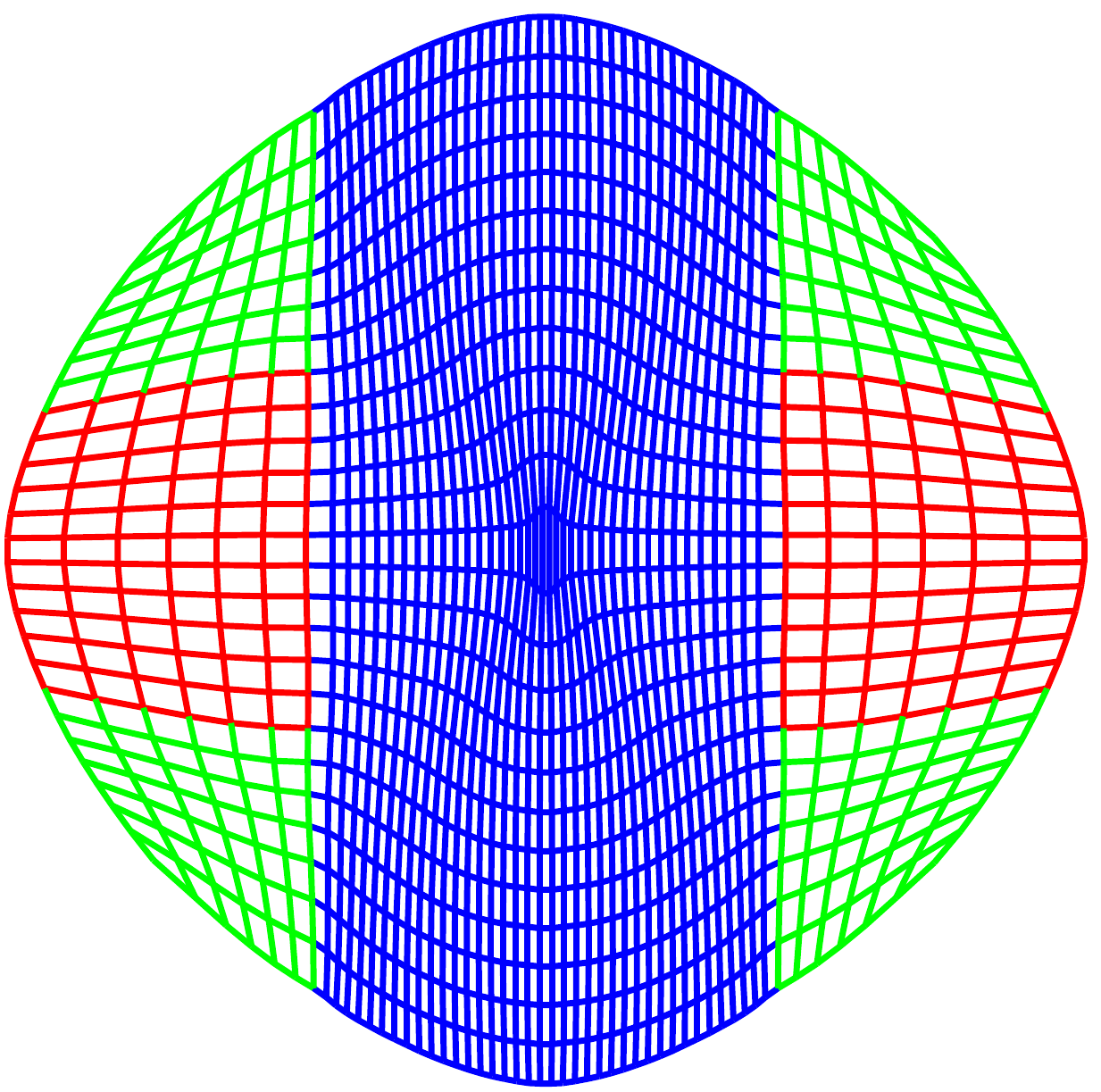}
\includegraphics[width=0.49\linewidth]{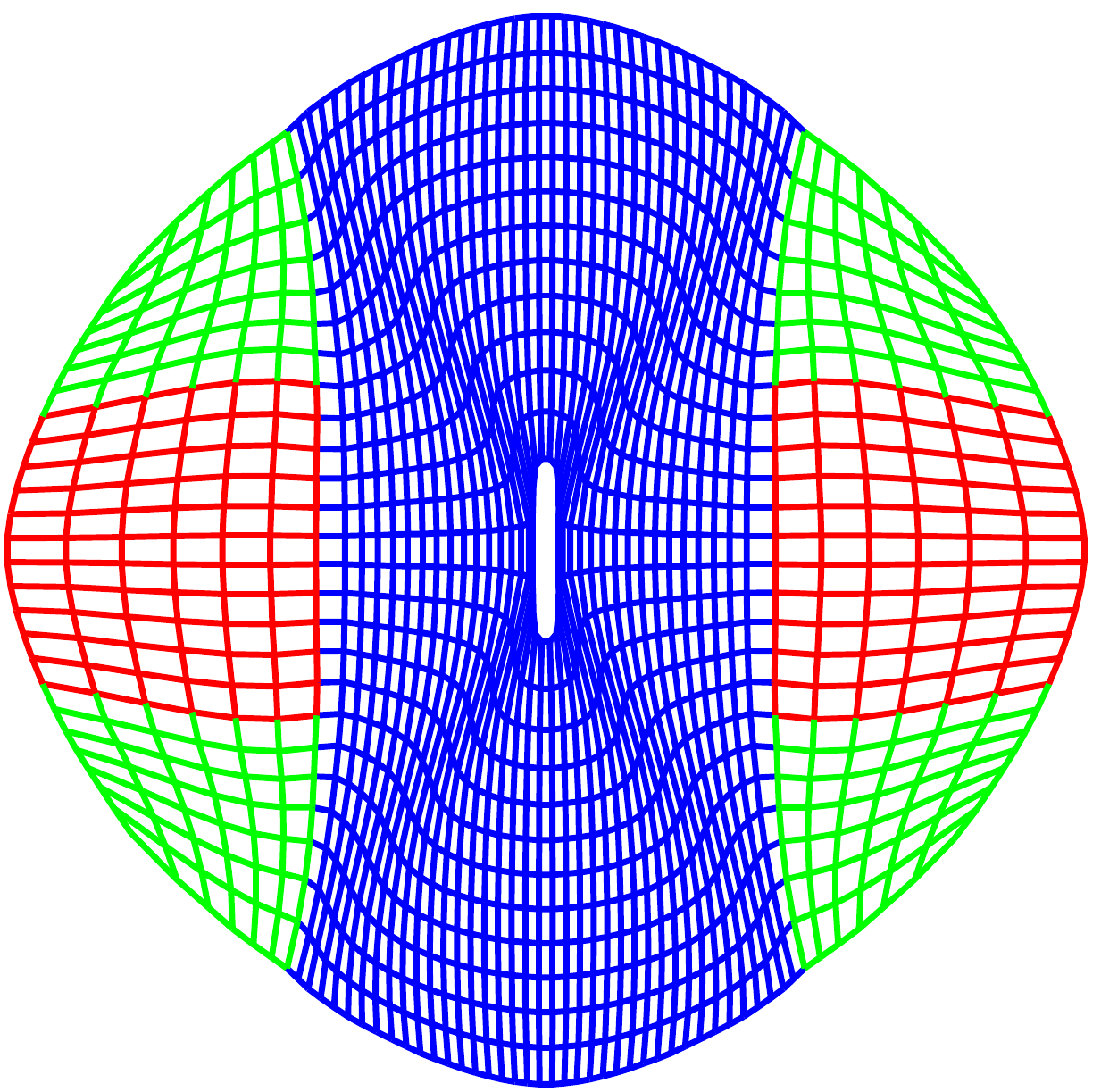}
\caption{Top: source density is piecewise constant on a rectangle, with different values indicated by color.  Optimal map from source to uniform density on the target, with cost 
$c(x,y) = \abs{x-y}^p$, for $p=1.05,2,3,6,$ from left to right and top to bottom.}%
\label{fig:OTVariousCosts}%
\end{figure}

\begin{example}\label{exOT3}
In this example we vary the cost function.  
Refer to Figure~\ref{fig:OTVariousCosts}, which is plotted for a grid of size $256^{2}$.
The source density $\mu$ is  piecewise constant on the rectangle $X = R = [-1,1]\times[-1/2,1/2]$.  It is given, up to normalization,  by
\[
1_{R}
+1_{
\left\{
\abs{x} \geqslant \frac 3 4,
\abs{y} \leqslant \frac 1 2
\right\}
}
+1_{\left\{  \left\vert x\right\vert \geqslant\frac{3}{4},\left\vert
y\right\vert \leqslant\frac{1}{4}\right\}}
\]
The regions are illustrated in the figure by color.   Note that the density is lowest on the blue middle strip $R_m= \{ \abs{x} \leqslant \frac 3 4 \} \subset R$ and highest in the red section.

The target density $\nu$ is uniform on
\[
Y = \left \{ \abs{x}^{8/5} + \abs{y}^{8/5} \le 1 \right \}
\]

The grid lines on $X$ are mapped to  $Y$, along with the colors of the lines.
Since the density is not uniform on $X$, squares coming from higher density areas will be mapped to larger regions, compared to squares from lower density areas.

Consider the second figure, which corresponds to $p=2$.  In this case the low density middle strip of the rectangle, $R_m$,  is mapped to a slightly convex vertical middle strip in the target.  The higher density edges go to corresponding vertical cap shapes in the target.  A square in the center of $R$ is mapped to a rectangle in $Y$ of aspect ratio about 2.

As $p$ increases, a square in the center of $R$ is mapped to a shape in $Y$ with increasing height and decreasing width.  The image of the boundary  vertical lines $\abs{x} = \frac 3 4$ in $R_m$ go from convex ($p=2$) to nearly linear ($p=3$) to concave ($p=6$).

Consider the first image, which corresponds to $p = 1.05$.  In  this case, $R_m$ is mapped to an elongated diamond shape in the middle of $Y$.  The midpoint of the top and bottom edges are mapped into the interior of the target, and their horizontal neighbors are folded in on each other.  The two vertical dents the top and bottom of the target illustrate this folding, which means  the inverse map from $Y$ to $R$ is discontinous.  (Similar results were obtained when the source was changed to  a rounded rectangle.)

\end{example}

\subsection{Partial Optimal Transportation}
The partial optimal transportation problem, studied by Caffarelli and McCann \cite{caffarelli2010free}
and Figalli \cite{figalli2010optimal}, extends the OT problem with quadratic costs to the case where there  is an excess of mass, and only a given fraction is to be transported.  The result is a free boundary problem to determine which mass should be transported  to minimize the total transportation cost.
Uniqueness of the mapping was established in \cite{caffarelli2010free} when $X$ and $Y$ are disjoint, and then by Figalli for when $X\cap Y \not = \emptyset$ when the total mass to be transported is in excess of the measure on the intersection.   Figalli proved local $C^1$ regularity of the free boundaries away from the intersection when $X$ and $Y$ are strictly convex.

The linear programming discretization \eqref{LPfull} is easily modified to include partial mass transportation, by replacing the marginal projection equalities with an inequality, along with a single equality constraint to reflect the total mass to be transported.
\[
\text{Minimize } I[\pi] = \sum_{i=1}^{n}\sum_{j=1}^{m}c_{ij}\pi_{ij},
\]%
\[
\text{Subject \ to:}\left\{
\begin{array}
[c]{cc}%
\text{ }\sum\limits_{j=1}^{m}\pi_{ij}\leqslant \mu_{i},  & i \in \{ 1, \dots, n \},\\
\text{ }\sum\limits_{i=1}^{n}\pi_{ij}\leqslant \nu_{j}, & j\in \{ 1, \dots, m \},\\
\sum_{ij} \pi_{ij} = m.\\
\pi_{ij}\geqslant 0.  \\
\end{array}
\right.
\]
We consider simple geometries for $X$ and $Y$, using uniform measures on the domains. 
We verified that free boundaries are never mapped to free boundaries, which is
consistent with Figalli's prediction.

Each plot shows multiple  free boundaries, corresponding to increasing the total mass to be transported.  See Figure~\ref{fig:FreeBoundary2}.  Note that the apparent kinks in the free boundary lines are plotting artifacts, since we plotted the support squares of the free boundaries without smoothing.  It would also be reasonable to smooth the free boundaries using piecewise linear interpolation.   

\begin{example}
 Here $X$ and $Y$ are upward and downward facing  parabolas, separated by a gap.  The plot shows several the partial optimal transportation free boundaries as the mass to be transported is increased.  The free boundary is both flat, and smooth.  \end{example}

The remaining examples consider $X$ and $Y$ which intersect.   In these cases, 
the total mass to be  transported is  expressed as a multiple of the total mass of the overlap.

\begin{example}
Here $X$ and $Y$ are intersecting squares. The problem was solved on a $512^2$ grid.  For small excess mass ratios, the partial transportation free boundary is small set near the corners of overlap of the squares.  
Only when the excess mass ratio is increased to 1.11 does the free boundary cover the full intersection of the squares. 
\end{example}

\begin{example}
Now $X$ and $Y$ are upward and downward facing parabolas which overlap.  
We see very different results compared to the case with squares.   In this case, 
even for very small excess mass ratios, the partial transportation free boundary contains the intersection  set $X\cap Y$. 
We plotted two figures.  In the first, the mass ratio goes from 1.01 to 1.11, with steps of 0.02.
 For a mass ratio of 1.01, the excess mass is not large compared to the resolution of the grid, but it is clear that as the mass is increased to 1.11, the full intersection is transported.  In the second figure, we steps of 0.04 in the mass ratio, going from 1.05 to 1.25.  At the mass ratio of  1.05, the free boundary is several grid points away from the intersection set. 
\end{example}
 
Notice the difference between the two examples.  At small excess mas ratios,  for the squares, the free boundary is clearly a subset of the intersection, while for the parabolas, it covers the entire intersection.   An obvious difference between the two examples is that the furthest point of the parabolas are not nearly as far away as the furthest points of the squares.    Another way to describe the difference is in terms of the tangents to  the free boundaries.  For the parabolas, the tangents to the free boundaries change very little, from a small positive slope to a small negative one.   For the squares, the tangents to the free boundaries go from nearly horizontal, to nearly vertical.

\begin{figure}[ptb]
\centering\begin{minipage}{1.2\linewidth}
\hspace{-1.25in}
\includegraphics[width=0.6\linewidth]{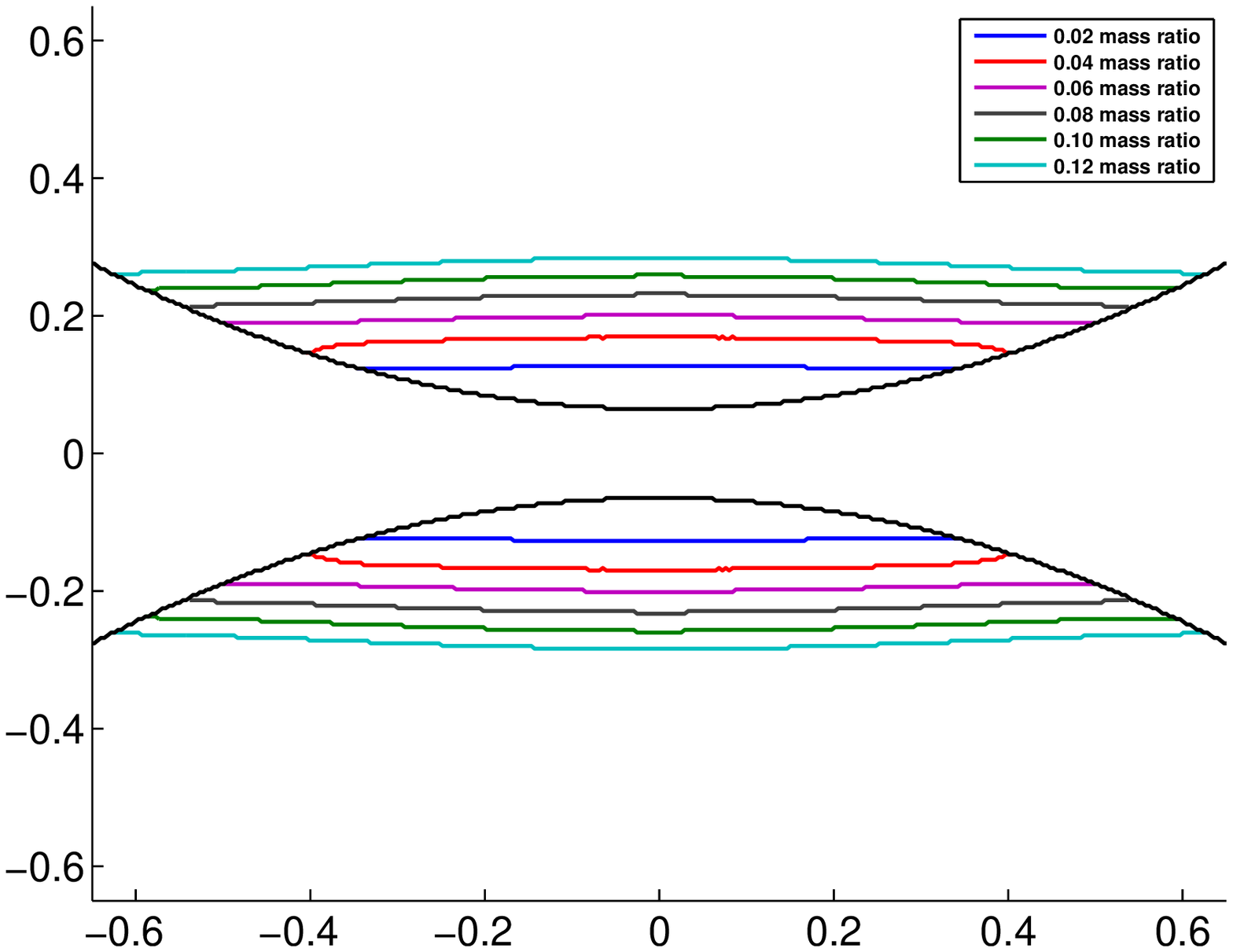}
\includegraphics[width=0.6\linewidth]{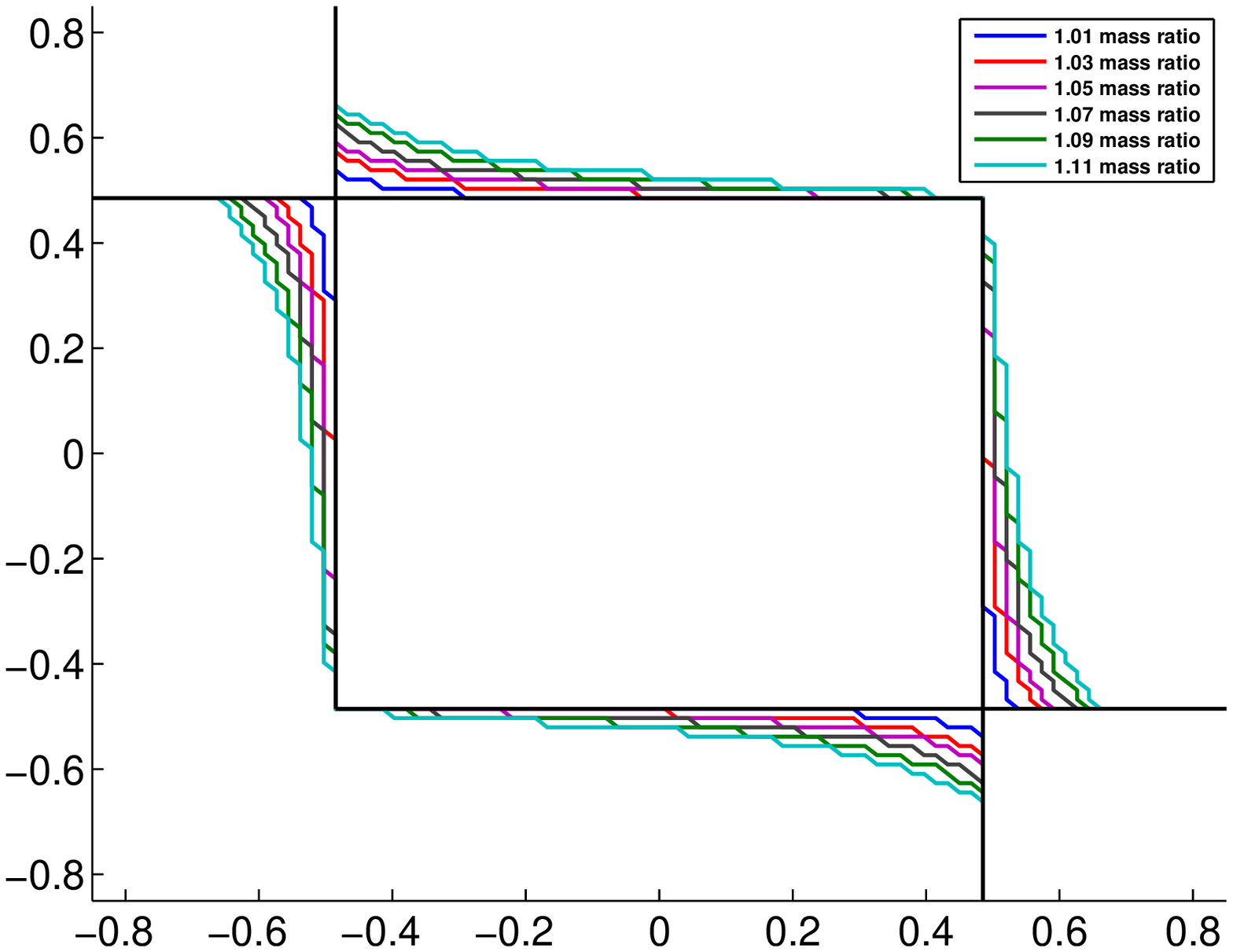}
\end{minipage}
\centering\begin{minipage}{1.2\linewidth}
\hspace{-1.25in}
\includegraphics[width=0.6\linewidth]{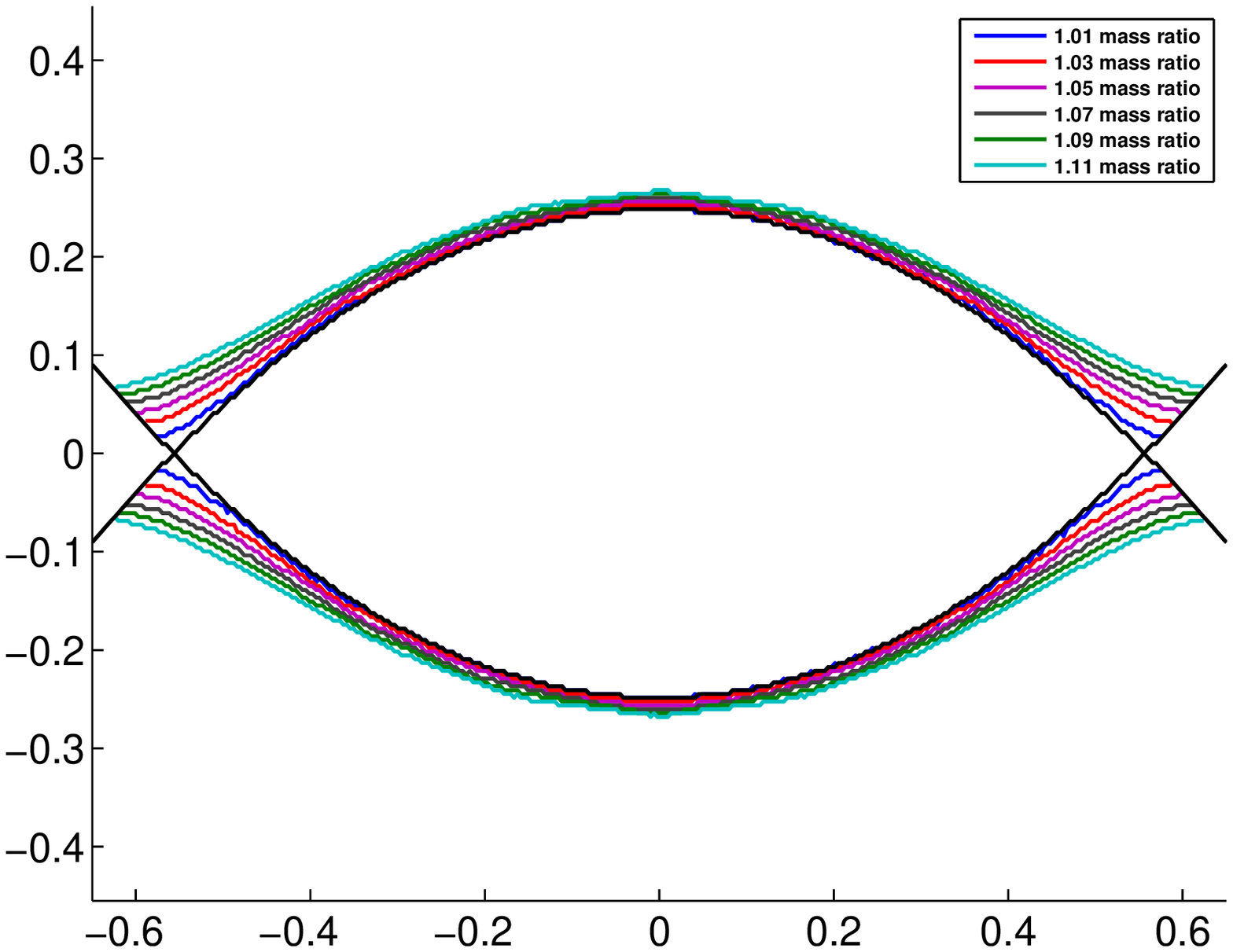}
\includegraphics[width=0.6\linewidth]{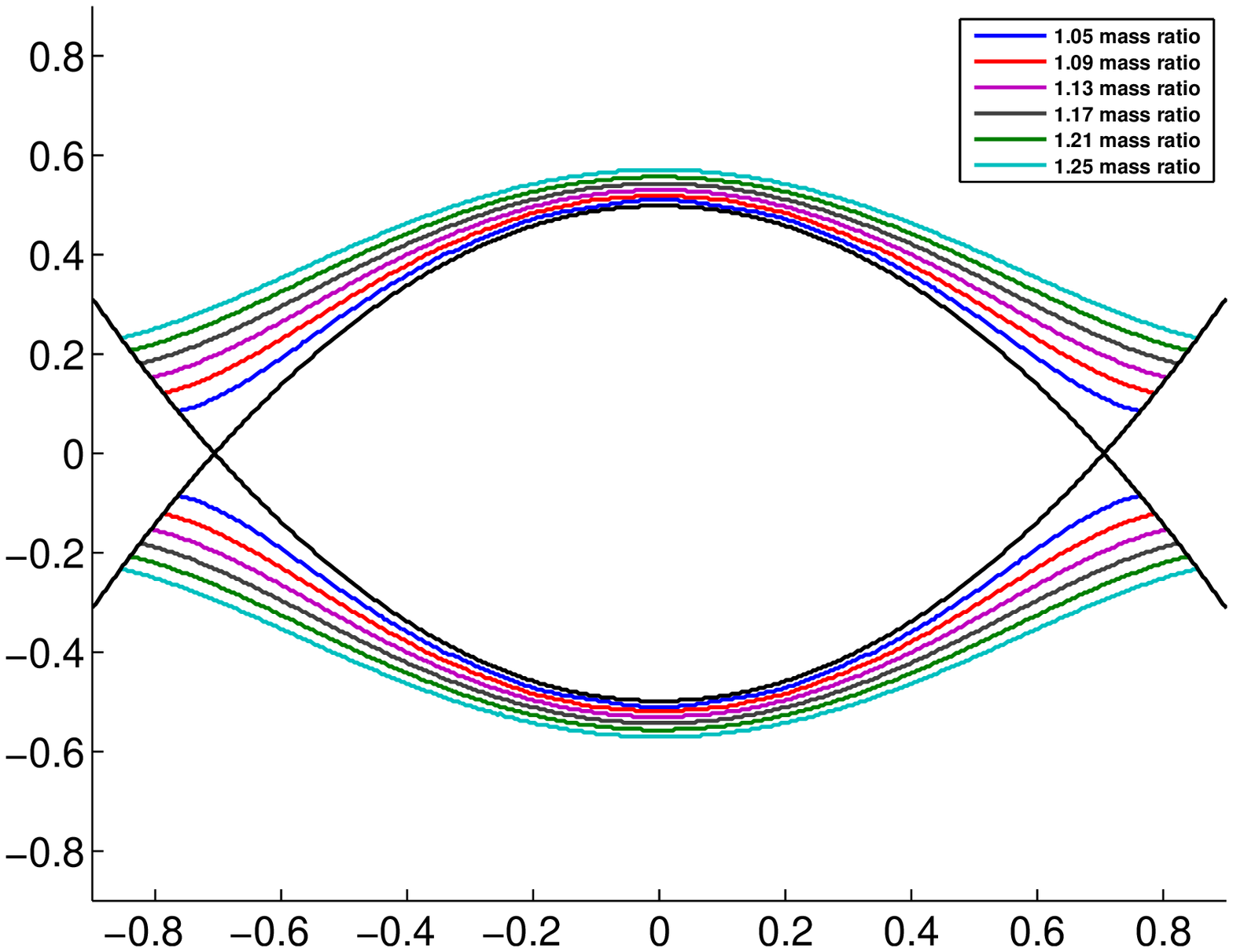}
\end{minipage}
\caption{Partial optimal transportation free boundaries corresponding to different levels of transported mass.  Top left: measures supported on non-overlapping  on parabolas. Top right: Overlapping squares.  Bottom: overlapping paraboloids.
}%
\label{fig:FreeBoundary2}%
\end{figure}

\subsection{Barycenter}\label{sec:barcyenterN}
Given the points $x = (x_1,\dots, x_n)$ and the positive weights $\mu_1, \dots, \mu_n$,  which sum to unity, 
the Euclidean barycenter $\bar \mu =\sum_{i=1}^n \mu_i x_i$ is the minimizer of
\[
I(x) = \sum_{i=1}^n \mu_i \abs{x-x_i}^2.
\]
The Wasserstein distance allows  us to define the barycenter of multiple measures.

By analogy, given the measures $\rho_1, \dots, \rho_n$ and the positive weights $\mu_1, \dots, \mu_n$,  which sum to unity, 
 the barycenter is the minimizer over probability measures of
\[
J(\rho) = \sum_{i=1}^n \mu_i W_2^2(\rho, \rho_i),
\]
where $W_2^2$ is the value of the optimal transportation problem with quadratic costs.  Variants include using other costs,  $c(x,y) = \abs{x-y}^p$.  Refer to
 Agueh and Carlier \cite{agueh2011barycenters}.  The barycenter problem can be discretized as a linear program, see~\cite{carlier2014numerical}.

\begin{example} The barycenter of three shapes is shown in 
 Figure~\ref{fig:CompareCircles}. This  example is taken from \cite[Figure~7]{benamou2015iterative}, where it  was computed on a lower resolution grid.  
  We also computed the barycenters of two rectangles, as in ~\cite{carlier2014numerical} and found very similar results (not plotted).
\end{example}


\begin{example}
Figure~\ref{fig:BC2AnnulusSections} shows the barycenter of three sections of an
annulus.  The support of the barycenter get smaller as $p$ increases.
The barycenter inherits some of the symmetries of the measures.
\end{example}

\begin{figure}[ptb]
\includegraphics[width=0.4\linewidth]{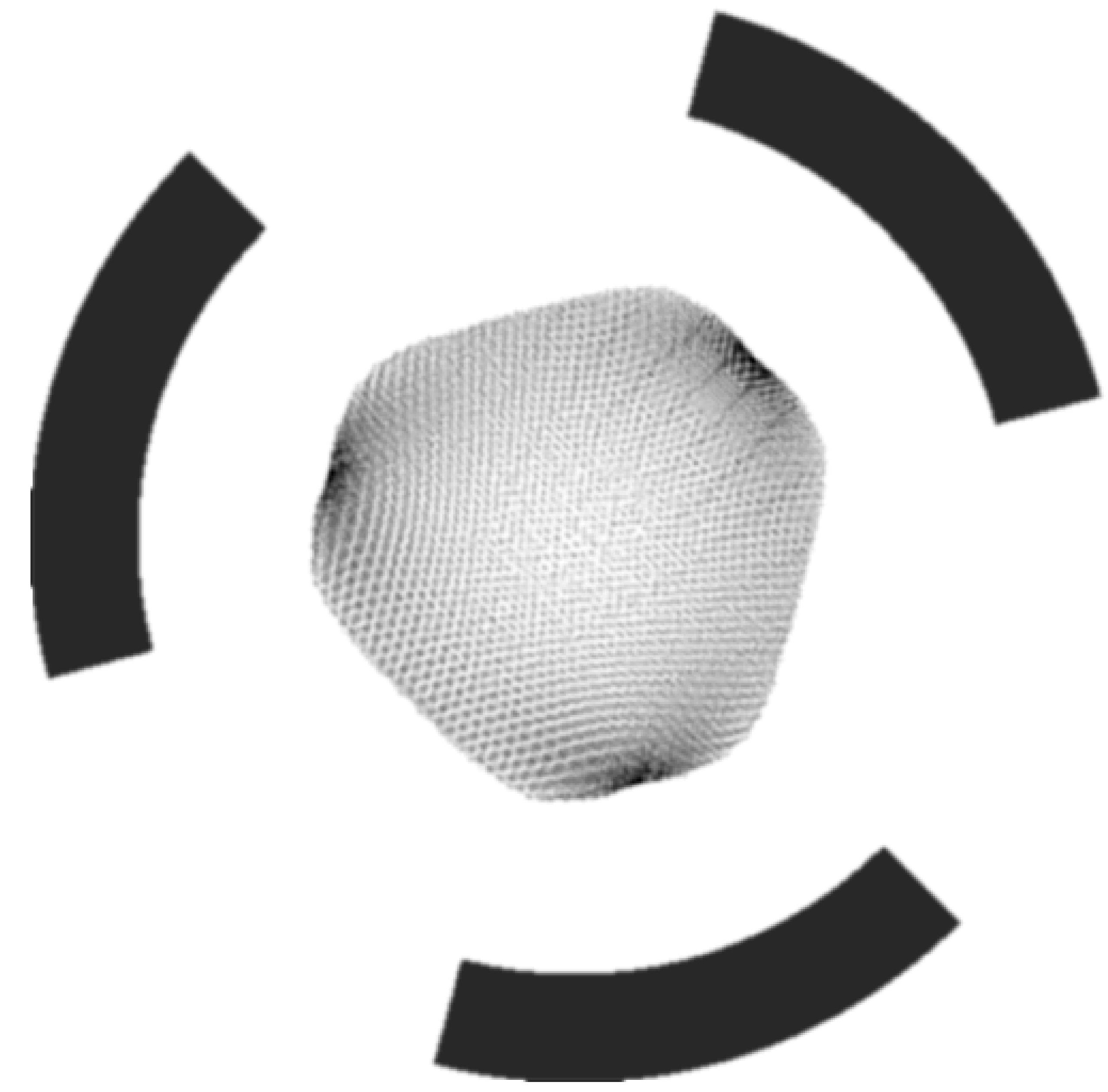} \hspace{0.05\linewidth}\includegraphics[width=0.4\linewidth]{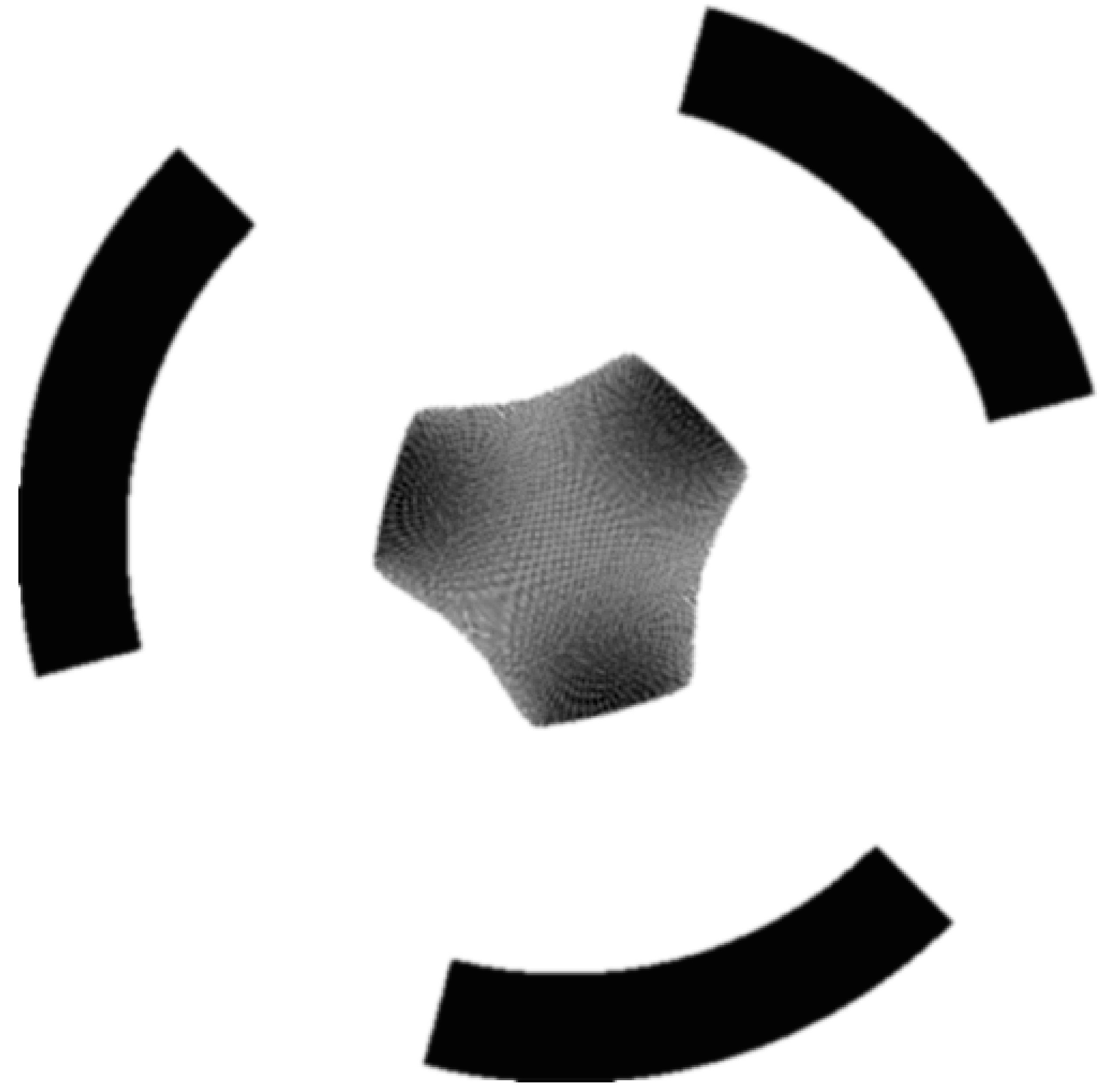}
\includegraphics[width=0.4\linewidth]{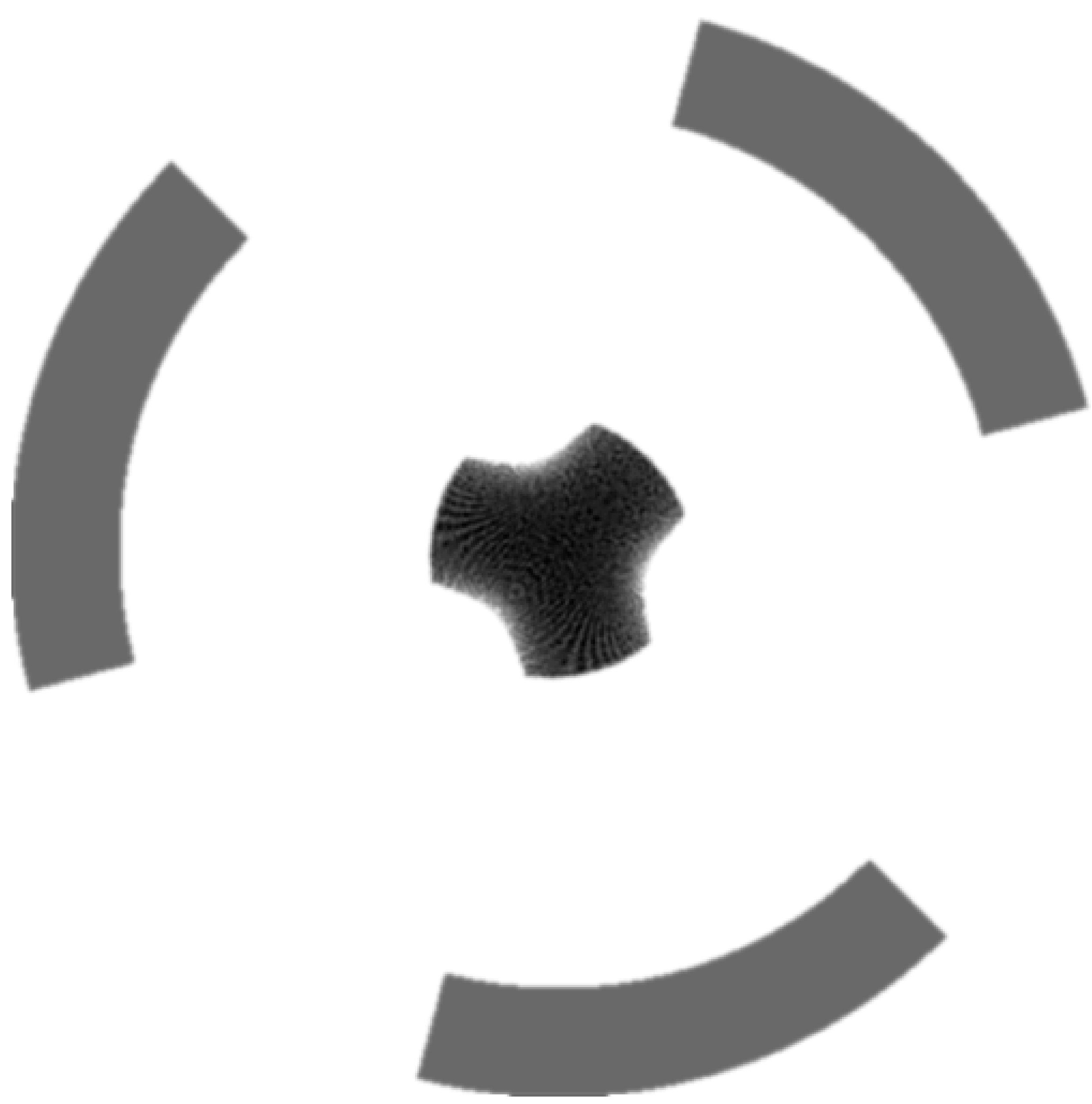} \hspace{0.05\linewidth} \includegraphics[width=0.4\linewidth]{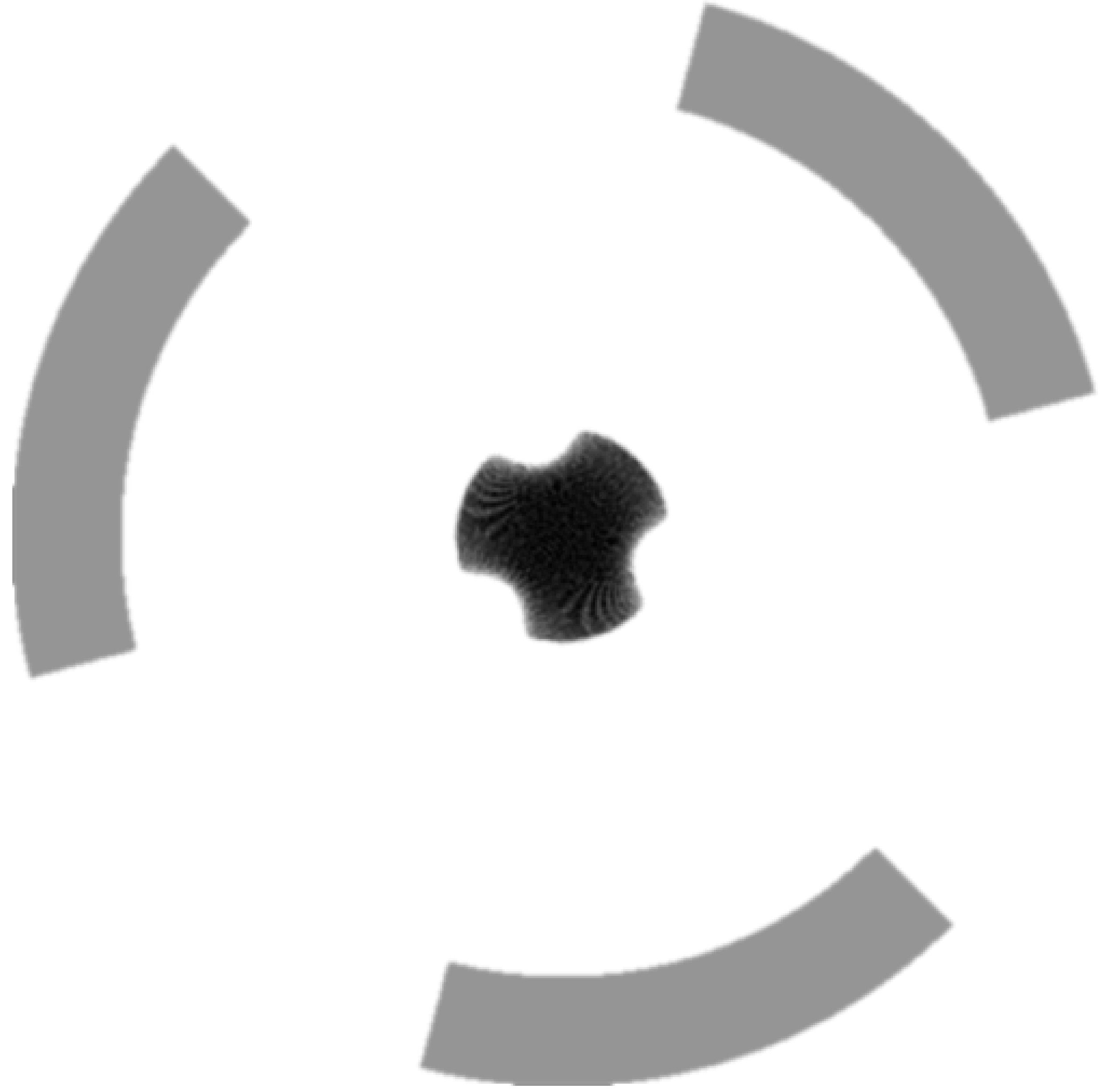}
\caption{Barycenter of three sections of an annulus. From left to
right and top to bottom:  cost $c(x,y) = \abs{x-y}^{p}$, $p=1.05,2,5,9.$ Grid size
$512^{2}.$}%
\label{fig:BC2AnnulusSections}%
\end{figure}

\begin{figure}[ptb]
\includegraphics[width=.9\linewidth]{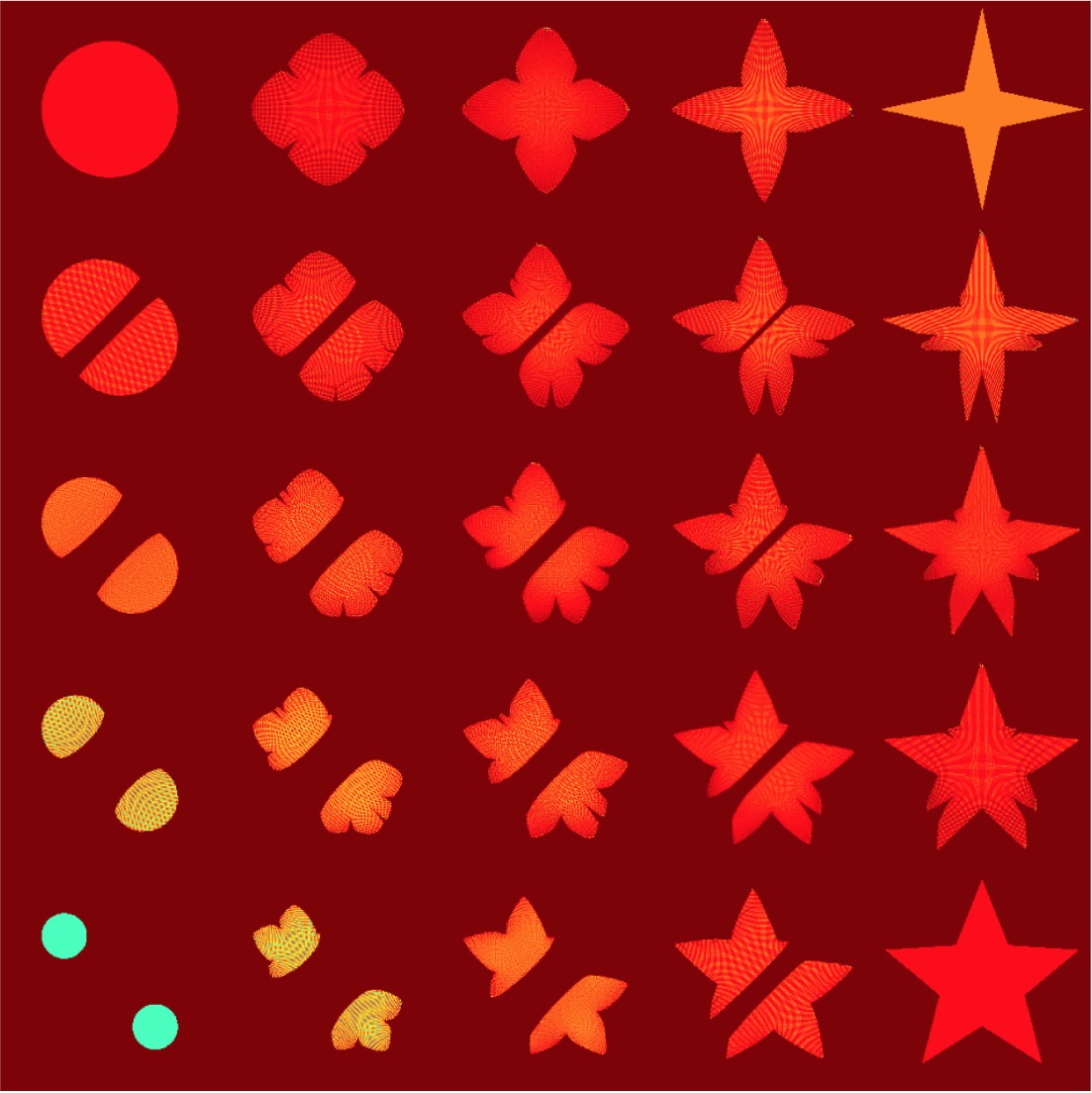}
\caption{Barycenters of the four shapes in the corners, computed on a grid of size $1024^{2}$.  
}%
\label{fig:BC4Shapes}%
\end{figure}

\begin{example}
The next example shows a range of barycenters of uniform measures supported on four shapes, as in \cite{solomon2015convolutional}. Figure~\ref{fig:BC4Shapes}
shows the solution computed on a $1024^{2}$ grid, without smoothing.   
The solution is accurate, and the  boundary of the shapes are sharp, compared to the blurring of boundaries introduced by entropic regularization.
\end{example}

\section{Conclusions}
We introduced an efficient  Linear Programming (LP) method for solving Optimal Transportation  problems on general domains for a wide range of cost functions.    
The method  is linear in complexity (in terms of both time and memory), and allowed for problem sizes of up to half a million variables to be solved in a few minutes on a laptop.  Available parallel LP solvers allow even larger problems to be solved.

The method was accurate enough to  compare solution features for optimal transportation maps between non-convex shapes for different costs functions.   For quadratic costs, the results are comparable to the best available method, but without the restriction of convexity of the target domain.   For other costs, our method is significantly faster and more accurate than available methods. 
We proved weak convergence of the approximation, and implemented the barycentric projection of the measure to improve the accuracy of the mapping.

The method was also applied to generalizations of the Optimal Transportation problem, including barycenters and partial Optimal Transportation.   Other generalizations are possible, as long as these probems can be expressed as Linear Programs.
\bibliographystyle{alpha}
\bibliography{OTbib}

\end{document}